\renewcommand{\theenumi}{\roman{enumi}}
\newcommand{\define}{\textbf}
\newcommand{\excise}[1]{}
\newcommand{\ul}{\underline}
\newcommand{\isom}{\cong}
\renewcommand{\setminus}{\smallsetminus}
\renewcommand{\phi}{\varphi}
\renewcommand{\tilde}{\widetilde}
\renewcommand{\hat}{\widehat}
\renewcommand{\bar}{\overline}
\newcommand{\N}{\mathbb{N}}
\newcommand{\R}{\mathbb{R}}
\newcommand{\Z}{\mathbb{Z}}
\renewcommand{\P}{\mathbb{P}}
\newcommand{\A}{\mathbb{A}}
\renewcommand{\O}{\mathcal{O}}
\newcommand{\RR}{\mathcal{R}}
\newcommand{\XX}{\mathfrak{X}}
\newcommand{\cC}{\mathcal{C}}
\newcommand{\cD}{\mathcal{D}}
\newcommand{\cL}{\mathcal{L}}
\newcommand{\cM}{\mathcal{M}}
\newcommand{\mx}{\mathrm{max}}
\newcommand{\kk}{k}
\newcommand{\ba}{\mathbf{a}}
\newcommand{\bb}{\mathbf{b}}
\newcommand{\bc}{\mathbf{c}}
\newcommand{\bm}{\mathbf{m}}
\newcommand{\bV}{\mathbf{V}}
\newcommand{\id}{\mathrm{id}}
\DeclareMathOperator{\Sym}{Sym}
\DeclareMathOperator{\gr}{gr}
\DeclareMathOperator{\ord}{ord}
\DeclareMathOperator{\Pic}{Pic}
\DeclareMathOperator{\Proj}{Proj}
\DeclareMathOperator{\Spec}{Spec}
\DeclareMathOperator{\Bs}{Bs}
\DeclareMathOperator{\Span}{Span}
\DeclareMathOperator{\lcm}{lcm}
\newcommand{\Cone}{\mathrm{Cone}}
\newcommand{\Conv}{\mathrm{Conv}}
\newcommand{\Eff}{\bar{\mathrm{Eff}}}
\newtheorem{theorem}{Theorem}[section]
\newtheorem{lemma}[theorem]{Lemma}
\newtheorem{proposition}[theorem]{Proposition}
\newtheorem{corollary}[theorem]{Corollary}
\theoremstyle{definition}
\newtheorem{definition}[theorem]{Definition}
\newtheorem{remark}[theorem]{Remark}
\newtheorem{example}[theorem]{Example}
\begin{document}
%%%%%%%%%%%%%%%%%%%%%%%%%%%%%%%%%%%%%%%

%%%%%%%%%%%%%%%%%%%%%%%%%%%%%%%%%%%%%%%%%%%%%%%%%%%%%%%%%%%%%%%%%%%%%%
\title{Okounkov bodies and toric degenerations}
\author{Dave Anderson}
\address{Department of Mathematics\\University of Washington\\Seattle, WA 98195, USA}
\email{dandersn@math.washington.edu}

\address{FSMP--Institut de Math\'ematiques de Jussieu, 75013
Paris, France}
\email{andersond@math.jussieu.fr}

\keywords{Okounkov body, toric variety, degeneration}%, Bott-Samelson variety, Schubert variety}
\date{October 9, 2012}
\thanks{The author was partially supported by NSF Grants DMS-0502170 and DMS-0902967, and also by the Clay Mathematics Institute as a Liftoff Fellow.}

% 14C20 (Primary), 14D06, 14M25, 52B20, 32M10, 14M15, 14M17 (Secondary)

\begin{abstract}
Let $\Delta$ be the Okounkov body of a divisor $D$ on a projective variety $X$.  We describe a geometric criterion for $\Delta$ to be a lattice polytope, and show that in this situation $X$ admits a flat degeneration to the corresponding toric variety.  This degeneration is functorial in an appropriate sense.
\end{abstract}

\maketitle
%%%%%%%%%%%%%%%%%%%%%%%%%%%%%%%%%%%%%%%%%%%%%%%%%%%%%%%%%%%%%%%%%%%%%%%

%%%%%%%%%%%%%%%%%%%%%%%%%%%%%%%%%%%%%%%%%%%%%%%%%%%%%%%%%%%%%%%%%%%%%%
\section{Introduction}
%%%%%%%%%%%%%%%%%%%%%%%%%%%%%%%%%%%%%%%%%%%%%%%%%%%%%%%%%%%%%%%%%%%%%%%

Let $X$ be a projective algebraic variety of dimension $d$ over an algebraically closed field $\kk$, and let $D$ be a big divisor on $X$.  (Following \cite{lm}, all divisors are Cartier in this article.)  As part of his proof of the log-concavity of the multiplicity function for representations of a reductive group, Okounkov showed how to associate to $D$ a convex body $\Delta_{Y_\bullet}(D) \subseteq \R^d$ \cite{ok1,okounkov}.  The construction depends on a choice of \emph{flag} $Y_\bullet$ of subvarieties of $X$, that is, a chain $X=Y_0 \supset Y_1 \supset \cdots \supset Y_d$, where $Y_r$ is a subvariety of codimension $r$ in $X$ which is nonsingular at the point $Y_d$.  One uses the flag to define a valuation $\nu=\nu_{Y_\bullet}$, which in turn defines a graded semigroup $\Gamma_{Y_\bullet} \subseteq \N\times\N^d$; the convex body $\Delta=\Delta_{Y_\bullet}(D)$ is the intersection of $\{1\}\times \R^d$ with the closure of the convex hull of $\Gamma_{Y_\bullet}$ in $\R\times\R^d$.  The details of this construction will be reviewed in \S\ref{s:ok}.

Recently, Kaveh-Khovanskii \cite{kk} and Lazarsfeld-Musta{\c{t}}{\u{a}} \cite{lm} have systematically developed this construction, and exploited it to show that $\Delta_{Y_\bullet}(D)$ captures much of the geometry of $D$.  For example, the volume of $D$ (as a divisor) is equal to the Euclidean volume of $\Delta$ (up to a normalizing factor of $d!$), and one can use this to prove continuity of the volume function, as a map $N^1(X)_\R \to \R$ (see \cite[Theorem B]{lm} and the references given there).  Many intersection-theoretic notions can also be defined and generalized using the convex bodies $\Delta(D)$; this is discussed at length in \cite{kk}.

These \emph{Okounkov bodies}---as $\Delta_{Y_\bullet}(D)$ is called in the literature stemming from \cite{lm}---are generally quite difficult to compute.  They are often not polyhedral; when polyhedral, they are often not rational; and even if $\Delta$ is a rational polyhedron, the semigroup used to define it need not be finitely generated.  In fact, the numbers appearing as volumes or coordinates of Okounkov bodies can be quite general \cite{klm}; they certainly may be irrational.

One situation, however, is easy.  When $X$ is a smooth toric variety, $D$ is a $T$-invariant ample divisor, and $Y_\bullet$ is a flag of $T$-invariant subvarieties, the Okounkov body $\Delta_{Y_\bullet}(D)$ is the lattice polytope associated to $D$ by the usual correspondence of toric geometry \cite[Proposition 6.1]{lm}.

In this article, we extend the connection between Okounkov bodies and toric varieties.  Suppose $X$ is an arbitrary variety and $D$ an ample divisor such that the corresponding semigroup $\Gamma_{Y_\bullet}$ is finitely generated, so $\Delta_{Y_\bullet}(D)$ is a rational polytope.  We construct a flat degeneration from $X$ to the (not necessarily normal) toric variety defined by $\Gamma_{Y_\bullet}$.  The normalization of the limit is the toric variety corresponding to the polytope $\Delta_{Y_\bullet}(D)$ (see Theorem \ref{t:deg}).

More generally, for a big divisor $D$ and a linear system $V \subseteq H^0(X,\cL)$, one has a semigroup $\Gamma_{Y_\bullet}(V)$ and an Okounkov body $\Delta_{Y_\bullet}(V)$ (see \S\ref{s:ok} for the precise definitions).  Write $X(V)$ for the closure of the image of $X$ in $\P(V)$.  Abusing notation slightly, let $\nu(V)\subseteq \Z^d$ denote the image of $V\setminus\{0\}$ under the valuation $\nu$.  A variety $X$ \emph{admits a flat degeneration} to a variety $X_0$ if there is a flat family $\XX \to \A^1$ such that the fiber $X_t$ is isomorphic to $X$ for $t\neq 0$, and the fiber over $0$ is $X_0$.

\begin{theorem}\label{t:main1}
Let $\nu=\nu_{Y_\bullet}$ be the valuation associated to a flag of subvarieties of $X$, and let $V\subseteq H^0(X,\cL)$ be a linear system such that $\Gamma=\Gamma_{Y_\bullet}(V)$ is finitely generated.
\renewcommand{\theenumi}{\alph{enumi}}
\begin{enumerate}
\item The variety $X(V)$ admits a flat degeneration to the (not necessarily normal) toric variety $X_0=\Proj \kk[\Gamma]$.  
The normalization of $X_0$ is the (normal) toric variety corresponding to the polytope $\Delta_{Y_\bullet}(V)$.

\medskip
\item If a torus $T$ acts on $X$, such that $V$ is a $T$-invariant linear system and $Y_\bullet$ consists of $T$-invariant subvarieties, then the degeneration is $T$-equivariant.

\medskip
\item Suppose $V'\subseteq V$ is a subsystem inducing a birational morphism $\phi\colon X(V) \to X(V')$, and whose semigroup is also finitely generated.  The corresponding degenerations of $X(V)$ and $X(V')$ are compatible: there is a commuting diagram of flat families
\begin{diagram}
\XX & & \rTo^{\Phi} & & \XX' \\
        & \rdTo &    & \ldTo \\
        &      &  \A^1,
\end{diagram}
such that $\Phi_t \isom \phi$ for $t\neq 0$, and $\Phi_0 = \phi_0$.

\medskip
\item Fix $0\leq r\leq d$, and assume $Y_r$ does not lie in the base locus of $V$.  Then the subvariety $Y_r(V) \subseteq X(V)$ admits a flat degeneration to a toric variety, compatible with that of $X(V)$.  The toric limit $(Y_r)_0$ corresponds to the face $\Delta_{Y_\bullet}(V) \cap ( \{0\}^{d-r}\times \Z^r )$.
\end{enumerate}

\end{theorem}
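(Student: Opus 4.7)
The plan is to realize $X(V) = \Proj R$, where $R = R(V) := \bigoplus_{m\geq 0} V^{(m)}$ is the homogeneous coordinate ring whose degree-$m$ piece is the image of $V^{\otimes m}$ in $H^0(X,\cL^{\otimes m})$, and to build the family by a Rees-algebra construction driven by $\nu$. The valuation filters each $V^{(m)}$ by the lex order on $\Z^d$, and the associated graded of $R$ with respect to this filtration is, tautologically, the semigroup algebra $\kk[\Gamma]$. To collapse this to a one-parameter family, I would choose a $\Z$-linear functional $\ell\colon \Z^d \to \Z$ that is strictly positive on the image of $\nu$ and realizes the lex order on a large enough finite subset of $\Gamma$ to kill every relation; finite generation of $\Gamma$ makes such an $\ell$ available by a genericity argument. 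Setting $F_{\geq i} V^{(m)} = \{\,s \in V^{(m)} : \ell(\nu(s)) \geq i\,\}$, form
\[
\tilde R \;=\; \bigoplus_{m\geq 0}\;\sum_{i\in\Z}\, (F_{\geq i} V^{(m)})\,t^{-i} \;\subseteq\; R[t,t^{-1}].
\]
This is a graded $\kk[t]$-subalgebra. Flatness over $\kk[t]$ is automatic since $\tilde R$ is $t$-torsion-free and $\kk[t]$ is a PID, and finite generation of $\Gamma$ forces $\tilde R$ to be a finitely generated $\kk[t]$-algebra. Then $\XX := \Proj_{\kk[t]}\tilde R \to \A^1$ is the desired projective flat family.

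Part (a) reduces to computing fibers. Inverting $t$ gives $\tilde R[t^{-1}] \isom R[t,t^{-1}]$, so the generic fiber is $X(V)$. The quotient $\tilde R/(t)$ is by construction the associated graded of $R$ for the $\ell$-filtration; because $\ell$ has been chosen to refine the lex order on enough of $\Gamma$, this agrees with $\gr_\nu R = \kk[\Gamma]$, so $X_0 = \Proj \kk[\Gamma]$. The normalization statement then follows from standard toric geometry: the saturation of $\Gamma$ in $\Z\times\Z^d$ is the cone over $\{1\}\times \Delta_{Y_\bullet}(V)$, whose Proj is by definition the normal toric variety attached to the polytope.

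Parts (b), (c), (d) follow by the functoriality of the Rees construction. For (b), $T$-invariance of $Y_\bullet$ means each $F_{\geq i}V^{(m)}$ is a $T$-submodule, so the natural $T$-action on $R[t,t^{-1}]$ (trivial on $t$) restricts to $\tilde R$, giving a $T$-equivariant family. For (c), the inclusion $R(V')\hookrightarrow R(V)$ respects the $\nu$-filtration, hence Rees'ing produces $\tilde R'\hookrightarrow \tilde R$; the induced $\A^1$-morphism $\XX\to\XX'$ restricts to $\phi$ on generic fibers and to the induced toric map $\phi_0$ on special fibers. For (d), the hypothesis $Y_r\not\subseteq \Bs(V)$ gives a nonzero restriction $V \to H^0(Y_r,\cL|_{Y_r})$ whose kernel is spanned by those sections with $\nu$-value outside the face $\Delta_{Y_\bullet}(V)\cap(\{0\}^{d-r}\times\Z^r)$ (immediate from the way $\nu_{Y_\bullet}$ reads off the first $d-r$ leading coordinates from $Y_1,\ldots,Y_{d-r}$); this kernel is a filtered homogeneous ideal of $R$, and passing to the Rees subalgebra cuts out $Y_r(V)$ generically and the corresponding face toric variety in the limit.

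The main obstacle is the first step: guaranteeing that a \emph{single} linear functional $\ell$ produces $\kk[\Gamma]$ as the special fiber rather than some coarser associated graded. The point is that although $\nu$ takes values in the infinite set $\Gamma$, finite generation bounds the multiplicative relations among the generators of $\kk[\Gamma]$ to a finite slice of $\Z^d$, and a generic positive integer functional can be chosen to agree with the lex order on that slice. Once $\ell$ is in place, flatness, identification of the fibers in (a), and the functoriality statements (b)--(d) are formal consequences of standard Rees-algebra properties.
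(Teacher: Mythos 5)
Your outline follows essentially the same route as the paper: filter $R(V)$ by the valuation, form a Rees algebra via a single linear functional $\ell$ chosen to agree with the valuation order on a finite controlling set, and then observe that the construction is functorial in the appropriate ways to get parts (b)--(d).  The one-functional construction is exactly the paper's Lemma~\ref{l:caldero} applied to the finite set of degrees appearing in a presentation of $\gr R$, and your flatness and finite-generation observations match the content of Proposition~\ref{p:ring-deg}.

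There is, however, one genuine gap, and it is precisely the step you dismiss as ``tautological.''  The associated graded $\gr_\nu R = \bigoplus_{(m,u)\in\Gamma} V^{(m)}_{\geq u}/V^{(m)}_{>u}$ is \emph{not} automatically the semigroup algebra $\kk[\Gamma]$.  By definition $\Gamma$ is the set of $(m,u)$ for which the graded piece is nonzero, but nothing in the definition says each nonzero piece is one-dimensional, nor that the product of two nonzero homogeneous elements is nonzero.  Both facts are needed to identify $\gr_\nu R$ with $\kk[\Gamma]$, and both are special to valuations coming from a \emph{complete} flag $Y_\bullet$.  The one-dimensionality is exactly Lemma~\ref{l:dim1} (the ``one-dimensional leaves'' lemma of Okounkov and Lazarsfeld--Musta\c{t}\u{a}), and it is the crucial geometric input: for a general $\Z^d$-valuation (for instance one built from a partial flag), the graded pieces can have dimension $>1$ and the limit is not toric.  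The absence of zero-divisors then follows from the multiplicativity of $\nu$, as in the proof of Theorem~\ref{t:deg}.  Without invoking Lemma~\ref{l:dim1}, your argument produces a flat degeneration of $X(V)$ to $\Proj(\gr_\nu R)$, but gives no reason for that limit to be a toric variety.  A second, smaller slip is in your parenthetical for part~(d): vanishing on $Y_r$ is detected by the first $r$ coordinates of $\nu$ (which come from $Y_1,\ldots,Y_r$), so the relevant face is cut out by $\nu_1=\cdots=\nu_r=0$; the projection $h\colon\Z^d\to\Z^{d-r}$ onto the \emph{last} $d-r$ coordinates then identifies this face with $\Delta_{Y'_\bullet}$ of the restricted flag, as in Example~\ref{ex:subvar}.
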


\noindent
The theorem summarizes the results in Theorem~\ref{t:deg} and Proposition~\ref{p:compatible}, together with their applications in Examples~\ref{ex:torus}, \ref{ex:birational}, and \ref{ex:subvar}.

In general, the question of when the semigroup $\Gamma_{Y_\bullet}(V)$ is finitely generated appears to be quite subtle, sensitive to both the linear system $V$ and the flag $Y_\bullet$ (see Example~\ref{ex:1} and \S\ref{ss:counterex}).  It is closely related to the existence of a finite SAGBI basis for the ring $R(V)$ (see \cite[\S5.6]{kk-alg}).  Motivated by this, we provide some criteria for finite generation in \S\ref{s:conditions}.  The key notion introduced there is that of a \emph{maximal divisor} in a linear system $V$, with respect to a fixed divisor $Y$.  Such divisors have maximal order along $Y$, and also have the property that their multiples are maximal in the powers $V^m$; thus they bound the growth of the semigroup $\Gamma_{Y_\bullet}(V)$.

The main purpose of this article is to develop a formal framework for the toric degenerations resulting from a finitely generated semigroup (\S\ref{s:deg}), and to show how criteria for finite generation can be applied in examples (\S\ref{s:examples}).  We also hope to motivate further study of the finite generation problem.

This work began with a desire to understand toric degenerations of flag varieties and Schubert varieties, which have been described from various points of view over the last fifteen years (see, e.g., \cite{gl,caldero,ab,kaveh,km}).  I plan to return to this subject in future paper \cite{and}, generalizing the example given in \S\ref{ss:bs}.

\smallskip
\noindent
{\it Acknowledgements.}  I am grateful to Sara Billey, Jos\'e Luis Gonz\'alez, S\'andor Kov\'acs, Rob Lazarsfeld, Ezra Miller, Mircea Musta{\c{t}}{\u{a}}, and Rekha Thomas for useful comments and fruitful discussions, and especially to Megumi Harada and Kiumars Kaveh for several valuable suggestions.  I also thank Bill Fulton and Ben Howard for helping me learn about toric degenerations.

%%%%%%%%%%%%%%%%%%%%%%%%%%%%%%%%%%%%%%%%%%%%%%%%%%%%%%%%%%%%%%%%%%%%%%%
\section{Valuations and semigroups}\label{s:vals}
%%%%%%%%%%%%%%%%%%%%%%%%%%%%%%%%%%%%%%%%%%%%%%%%%%%%%%%%%%%%%%%%%%%%%%%

Throughout this article, let $k$ denote an algebraically closed field.  Let $K$ be a field extension of $\kk$; in our applications, $K$ will be the function field of a variety over $k$, and we may always assume it has finite transcendence degree.  Equip $\Z^d$ with the lexicographic order, making it an ordered abelian group.  Following \cite{kk}, a $\Z^d$-\define{valuation} on $K$ is a map $\nu:K\setminus\{0\} \to \Z^d$ satisfying
\begin{enumerate}
\item for all $f,g\in K$, $\nu(f+g)\geq\min\{\nu(f),\nu(g)\}$ (when all these elements are nonzero); and
\item for nonzero $f,g$ in $K$, $\nu(fg)=\nu(f)+\nu(g)$.
\end{enumerate}
It follows from the first of these conditions that
\begin{equation}\label{e:valcond}
\nu(f+g) > \min\{\nu(f),\nu(g)\} \text{ only if } \nu(f)=\nu(g).
\end{equation}
(Indeed, suppose $\nu(f)<\nu(g)$, but $\nu(f+g)>\nu(f)$.  Then $\nu(f)=\nu((f+g)+(-g)) \geq \min\{\nu(f+g),\nu(g)\} >\nu(f)$, a contradiction.)

To any (nonzero) finite-dimensional $\kk$-vector subspace $V\subseteq K$, we associate a ring, a semigroup, a cone, and a convex body, still following \cite{kk}.  To wit,
\begin{itemize}
\item the ring $R(V)$ is the graded ring $\bigoplus_{m\geq 0} V^m$, where $V^m$ is the subspace spanned by products $f_1\cdots f_m$, with each $f_i\in V$;

\smallskip

\item $\Gamma(V)=\Gamma_\nu(V) = \{(m,\nu(f))\in \N\times\Z^d \,|\, f\in V^m\setminus\{0\}\}$, a graded semigroup in $\N\times \Z^d$;

\smallskip

\item $\Cone(V) = \Cone(\Gamma(V))$ is the closure of the convex hull of $\Gamma(V)$ in $\R\times \R^d$;

\smallskip

\item the \define{Newton-Okounkov body} is $\Delta(V) = \Delta_\nu(V) = \Cone(V) \cap (\{1\}\times \R^d)$.
\end{itemize}

It will be convenient to use an ``extended'' valuation 
\[
  \hat\nu:R(V)\setminus\{0\} \to \N\times \Z^d,
\]
defined by $\hat\nu(f) = (m,\nu(f_m))$, where $f_m$ is the highest-degree 
homogeneous component of $f$.  By definition, the image of $\hat\nu$ is $\Gamma=\Gamma_\nu(V)$.   Also, we will use a slightly modified total order on $\Gamma$, induced from the order on $\N\times\Z^d$ given by
$(m_1,u_1) \leq (m_2,u_2)$ iff $m_1<m_2$ or $m_1=m_2$ and $u_1\geq u_2$.  Note the switch: this is the opposite of the order defined in \cite[p.~23]{kk}.

We record a few easy facts which will be useful later:

\begin{lemma}\label{l:easy}
\renewcommand{\theenumi}{\alph{enumi}}
Let $\Gamma=\Gamma_\nu(V)$ and $R=R(V)$.
\begin{enumerate}
\item The order on $\N\times\Z^d$ respects addition, and induces an ordered group structure on $\Z\times\Z^d$.

\smallskip

\item Given any $(m,u)\in\N\times\Z^d$, the set 
\[
  \Gamma_{\leq (m,u)} = \{(m',u')\in \Gamma \,|\, (m',u')\leq (m,u)\}
\]
is finite.

\smallskip

\item For any $(m,u)\in \Gamma$,
\[
  R_{\leq (m,u)} = \{ f\in R \,|\, \hat\nu(f)\leq (m,u) \}
\]
is a (finite-dimensional) vector subspace of $R$.  (I.e., it is closed under addition.)

\smallskip

\item For any $(m,u)$ and $(m',u')$ in $\Gamma$, we have
\[
  R_{\leq (m,u)} \cdot R_{\leq (m',u')} \subseteq R_{\leq (m+m',u+u')}.
\]
\qed
\end{enumerate}
\end{lemma}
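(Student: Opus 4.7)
The plan is to dispatch the four parts in sequence: (a) is purely order-theoretic, (b) is a finiteness statement about valuations on finite-dimensional subspaces, and (c), (d) amount to bookkeeping about $\hat\nu$ combined with the valuation axioms and (a).

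For (a), a direct check from the definition suffices. If $(m_1,u_1)\leq(m_2,u_2)$ and $(n,v)\in\N\times\Z^d$, then either $m_1<m_2$, giving $m_1+n<m_2+n$, or $m_1=m_2$ with $u_1\geq u_2$ in lex order, giving $m_1+n=m_2+n$ and $u_1+v\geq u_2+v$ (since lex order on $\Z^d$ itself respects addition). Either way $(m_1+n,u_1+v)\leq(m_2+n,u_2+v)$, and the extension to $\Z\times\Z^d$ is formal once inverses are adjoined.

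For (b), the key observation is that for any finite-dimensional $\kk$-subspace $W\subseteq K$, the image $\nu(W\setminus\{0\})$ has cardinality at most $\dim W$: elements with pairwise distinct valuations are $\kk$-linearly independent, as follows by applying (\ref{e:valcond}) repeatedly to a hypothetical linear relation to force the valuation of the relation itself to equal the smallest $\nu(f_i)$, contradicting the fact that the relation is zero. Since $(m',u')\leq(m,u)$ in the modified order forces $m'\leq m$, the set $\Gamma_{\leq(m,u)}$ is contained in the finite union $\bigsqcup_{m'\leq m}\{m'\}\times\nu(V^{m'}\setminus\{0\})$.

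For (c), I would argue by cases on the top degrees of $f$ and $g$. Write $f=\sum f_i$ and $g=\sum g_i$ with $f_i,g_i\in V^i$, and let $n,n'$ be the highest indices with $f_n,g_{n'}\neq 0$. If $n\neq n'$, the top component of $f+g$ is inherited from whichever summand has larger degree, so $\hat\nu(f+g)$ equals one of $\hat\nu(f),\hat\nu(g)$. If $n=n'$ and $f_n+g_n\neq 0$, then $\hat\nu(f+g)=(n,\nu(f_n+g_n))$; here the axiom $\nu(f_n+g_n)\geq\min\{\nu(f_n),\nu(g_n)\}$, combined with the reversed convention on the second coordinate, translates into $\hat\nu(f+g)\leq\max\{\hat\nu(f),\hat\nu(g)\}$ in the modified order. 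If $f_n+g_n=0$ the top degree drops, and $\hat\nu(f+g)$ becomes strictly smaller. Closure under scalars is immediate since $\hat\nu(\lambda f)=\hat\nu(f)$ for $\lambda\in\kk^*$. Part (d) then follows from multiplicativity of $\nu$: the top component of $fg$ is $f_ng_{n'}$, which is nonzero because its valuation $\nu(f_n)+\nu(g_{n'})$ is defined, so $\hat\nu(fg)=\hat\nu(f)+\hat\nu(g)$, and (a) delivers the claim. No step is technically difficult; the only subtle point worth flagging is to track the reversed order convention on the second coordinate consistently, so that the valuation inequality produces the desired subspace condition in (c) rather than its opposite.
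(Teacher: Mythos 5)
Your proof is correct and fills in the details that the paper omits (Lemma~\ref{l:easy} is stated without proof, as its label and the terminal $\qed$ indicate). Each part checks out: (a) is the direct verification that the modified order respects addition coordinatewise; (b) uses the standard fact that elements of a finite-dimensional subspace with pairwise distinct valuations are linearly independent, so $\nu(V^{m'}\setminus\{0\})$ is finite for each $m'\leq m$; (c) correctly handles the three cases on top degrees, including the key translation of $\nu(f_n+g_n)\geq\min\{\nu(f_n),\nu(g_n)\}$ into $\hat\nu(f+g)\leq\max\{\hat\nu(f),\hat\nu(g)\}$ under the reversed second-coordinate convention, and the degree-drop case when $f_n+g_n=0$; and (d) follows from $\hat\nu(fg)=\hat\nu(f)+\hat\nu(g)$ (using that $f_n g_{n'}\neq 0$ in the integral domain $K$) together with part (a). This is exactly the argument the author intended the reader to supply.
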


\begin{lemma}\label{l:fg}
Let $\Gamma\subseteq \N\times\N^d$ be a graded semigroup, and suppose $\Cone(\Gamma)$ is generated by $\Gamma \cap (\{1\}\times \N^d)$.  Then $\Gamma$ is finitely generated. \qed
\end{lemma}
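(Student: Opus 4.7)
The plan is to translate the question into one about the semigroup algebra $\kk[\Gamma]$ and sandwich it between two tame algebras: the $\kk$-subalgebra generated by a cone-spanning subset of $\Gamma \cap (\{1\} \times \N^d)$, and the semigroup algebra of the saturation $\Cone(\Gamma) \cap \N^{d+1}$. Specifically, the hypothesis allows me to fix finitely many elements $v_1, \ldots, v_N \in \Gamma \cap (\{1\} \times \N^d)$ with $\Cone(\Gamma) = \Cone(v_1, \ldots, v_N)$, a rational polyhedral cone. Let $S \subseteq \Gamma$ denote the subsemigroup generated by $v_1, \ldots, v_N$, and set $C = \Cone(\Gamma) \cap \N^{d+1}$. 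Then $S \subseteq \Gamma \subseteq C$.

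Next I invoke Gordan's fundamental-parallelepiped trick on $C$: any $\gamma \in C$ admits a representation $\gamma = \sum_i \lambda_i v_i$ with $\lambda_i \in \R_{\geq 0}$, and splitting $\lambda_i = \lfloor \lambda_i \rfloor + \{\lambda_i\}$ yields $\gamma = \sum_i \lfloor \lambda_i \rfloor v_i + r$, where $r = \sum_i \{\lambda_i\} v_i$ is a lattice point in the bounded parallelepiped $B = \{\sum_i t_i v_i : 0 \leq t_i \leq 1\}$. Since $B \cap \N^{d+1}$ is finite, the ring $\kk[C]$ is a finitely generated $\kk[S]$-module (generated by $\{t^r : r \in B \cap \N^{d+1}\}$); in particular $C$ is a finitely generated semigroup.

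Because $\kk[S]$ is a finitely generated, hence Noetherian, $\kk$-algebra, and $\kk[\Gamma]$ is a $\kk[S]$-submodule of the finite module $\kk[C]$, the ring $\kk[\Gamma]$ is itself finitely generated as a $\kk[S]$-module, and therefore as a $\kk$-algebra. Since $\kk[\Gamma]$ is $\Z^{d+1}$-graded with at most one-dimensional homogeneous pieces, a finite set of $\kk$-algebra generators may be chosen to be monomials $t^{\gamma_1}, \ldots, t^{\gamma_M}$, and then $\gamma_1, \ldots, \gamma_M \in \Gamma$ generate $\Gamma$ as a semigroup. The only subtlety is that $\Gamma$ may be strictly smaller than the saturation $C$, so Gordan's lemma cannot be applied to $\Gamma$ directly; the Noetherian sandwich $\kk[S] \subseteq \kk[\Gamma] \subseteq \kk[C]$ is precisely what circumvents this.
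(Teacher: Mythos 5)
The paper does not actually give a proof of this lemma, instead citing \cite[Corollary~2.10]{bg} (and mentioning an unrecorded alternative communicated by Thomas), so there is no in-text argument to compare against. Your argument is a correct, self-contained proof along the standard ``Noetherian sandwich'' lines: choose finitely many degree-one elements $v_1,\dots,v_N$ of $\Gamma$ generating the cone, let $S$ be the monoid they generate and $C=\Cone(\Gamma)\cap\N^{d+1}$ the saturation, use the fundamental-parallelepiped decomposition to see that $\kk[C]$ is module-finite over the finitely generated (hence Noetherian) algebra $\kk[S]$, conclude that the intermediate $\kk[S]$-submodule $\kk[\Gamma]$ is also module-finite and hence a finitely generated $\kk$-algebra, and extract monomial generators $t^{\gamma_1},\dots,t^{\gamma_M}$ from the $\Z^{d+1}$-grading to deduce that $\gamma_1,\dots,\gamma_M$ generate $\Gamma$. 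All the steps check, and this is very likely close to the argument in the cited reference. The one place I would add a sentence is at the very start: you assert that the hypothesis permits choosing \emph{finitely many} $v_i$, which deserves a word of justification. Since $\Cone(\Gamma)$ is the nonnegative span of vectors with first coordinate $1$, it contains no nonzero vector with first coordinate $0$; hence its slice at height $1$ is bounded, so $\Gamma\cap(\{1\}\times\N^d)$ is a bounded set of lattice points and therefore finite, making the choice possible (and the cone rational polyhedral, as you assert).
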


\noindent
This is a special case of \cite[Corollary~2.10]{bg}; I thank Rekha Thomas for this reference (and for showing me an alternative proof).

We need one more algebraic fact:

\begin{lemma}\label{l:normal}
Let $\Gamma\subseteq \N\times\N^d$ be as in Lemma~\ref{l:fg}.  Let $\Delta \subseteq \R^d$ be the polytope given by $\{1\}\times\Delta = \Cone(\Gamma)\cap (\{1\}\times\R^d)$, and assume that every lattice point in $d\Delta$ lies in $\Gamma$, i.e., $\Gamma\cap(\{d\}\times\N^d) = (\{d\}\times d\Delta)\cap(\{d\}\times\N^d)$.  Then $\Proj\kk[\Gamma]$ is the normal toric variety corresponding to $\Delta$. \qed
\end{lemma}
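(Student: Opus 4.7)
The plan is to identify $\Proj\kk[\Gamma]$ with the standard construction $X_\Delta=\Proj\kk[S_\Delta]$ of the normal toric variety of $\Delta$, where $S_\Delta=\bigcup_{m\ge 0}\{m\}\times(m\Delta\cap\Z^d)$. First I would observe that $\Delta$ is genuinely a lattice polytope: since $\Cone(\Gamma)$ is generated by $\Gamma\cap(\{1\}\times\N^d)$, the vertices of $\Delta$ lie among these generators, hence in $\N^d$. Writing $n=\dim\Delta\le d$, both $\Gamma$ and $S_\Delta$ are finitely generated (by Lemma~\ref{l:fg} and Gordan's lemma), so $\Proj\kk[\Gamma]\isom\Proj\kk[\Gamma]^{(d)}$ and $\Proj\kk[S_\Delta]\isom\Proj\kk[S_\Delta]^{(d)}$. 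It therefore suffices to establish the semigroup equality
\[
\Gamma\cap(\{md\}\times\N^d)\;=\;md\Delta\cap\Z^d\qquad\text{for all }m\ge 0,
\]
which will identify the two $d$-th Veronese subrings graded piece by graded piece.

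The inclusion $\subseteq$ is automatic, since $\Gamma\subseteq\Cone(\Gamma)$ and $\Cone(\Gamma)\cap(\{md\}\times\R^d)=\{md\}\times md\Delta$. The reverse inclusion is the heart of the argument, and the key input is the integer decomposition property (IDP) for $d\Delta$: by a theorem of Ewald--Wessels, any scaling $kP$ of a lattice polytope $P$ of dimension $n$ is IDP whenever $k\ge n-1$, so in particular $d\Delta$ is IDP. This means every lattice point of $m(d\Delta)=md\Delta$ is a sum of $m$ lattice points of $d\Delta$. By hypothesis every lattice point of $d\Delta$ already lies in $\Gamma$, and $\Gamma$ is closed under addition, so every lattice point of $md\Delta$ lies in $\Gamma$ as well. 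Combining the two inclusions yields the desired equality of graded pieces, so $\kk[\Gamma]^{(d)}=\kk[S_\Delta]^{(d)}$ as graded rings; passing to $\Proj$ gives $\Proj\kk[\Gamma]\isom X_\Delta$.

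The main obstacle is invoking the correct IDP statement for $d\Delta$, which relies on the nontrivial Ewald--Wessels (or Bruns--Gubeladze) bound from the theory of lattice polytopes. Once that classical input is available, the rest of the proof is a mechanical Veronese identification that introduces no additional geometric content.
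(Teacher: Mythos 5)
Your proof is correct and takes essentially the same route the paper sketches immediately after the lemma: both reduce to the $d$-th Veronese subring and invoke the Ewald--Wessels/Bruns--Gubeladze bound that the dilation $d\Delta$ has the integer decomposition property (equivalently, that the semigroup ring on $d\Delta\cap\Z^d$ is normal), which is exactly what the paper is citing via \cite[Corollary~7.45]{bg}. You merely make explicit the two steps the paper compresses --- the graded-piece comparison $\Gamma\cap(\{md\}\times\N^d)=md\Delta\cap\Z^d$ via IDP, and the Veronese identification $\Proj\kk[\Gamma]\isom\Proj\kk[\Gamma]^{(d)}$ --- so this is the same argument, written out in more detail.
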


The content of the statement is that $\Proj\kk[\Gamma]$ is normal, and this is a consequence of fact that the semigroup ring $\kk[\Gamma_d]$ is normal, where $\Gamma_d\subseteq\Gamma$ is the sub-semigroup generated by $\Gamma\cap(\{d\}\times\N^d)$ and the polytope $\Delta$ is $d$-dimensional (see, e.g., \cite[Corollary~7.45]{bg} for a finer criterion).  It follows that $\Proj\kk[\Gamma]\isom\Proj\kk[\Gamma_d]$.

\begin{remark}\label{rmk:proj-norm}
While $\Proj\kk[\Gamma]$ is normal in the situation of Lemma~\ref{l:normal}, it need not be projectively normal---that is, the semigroup $\Gamma$ may not contain all the lattice points of $\Cone(\Gamma)$.  One has normality of the graded ring $\kk[\Gamma]$ when $p(\Delta\cap\Z^d) = (p\Delta\cap\Z^d)$ for all $p>0$, but this is far from true in general.  See \cite{bg}, especially Chapter~2, for discussions of non-normal semigroup rings.

Another normality criterion, with slightly weaker hypotheses than those of Lemma~\ref{l:normal}, is given in \cite[Corollary~3.7]{ck}.
\end{remark}

\begin{remark}\label{rmk:lex-order}
It may be interesting to consider valuations with respect to different total orders on the lattice $\Z^d$; one would need a variant of Lemma~\ref{l:caldero} in order for the most general results of this paper to go through (e.g., Proposition~\ref{p:ring-deg}).  However, for valuations arising from the Okounkov construction (to be described in the next section), lexicographic order is the most natural one to use.  Since our emphasis in the 
present paper is on such valuations, we shall work with lexicographic order exclusively.
\end{remark}

%%%%%%%%%%%%%%%%%%%%%%%%%%%%%%%%%%%%%%%%%%%%%%%%%%%%%%%%%%%%%%%%%%%%%%%
\section{Okounkov bodies}\label{s:ok}
%%%%%%%%%%%%%%%%%%%%%%%%%%%%%%%%%%%%%%%%%%%%%%%%%%%%%%%%%%%%%%%%%%%%%%%

We will be mainly interested in valuations which arise from a geometric 
construction; see \cite{lm} for a more detailed description.  Let $X$ be a 
projective variety of dimension $d$, and let $Y_\bullet$ be a flag of 
subvarieties; recall that this means each $Y_i$ has codimension $i$, and is nonsingular at the point $Y_d$.  (In \cite{lm}, the condition that $Y_d$ be a nonsingular point defines an \emph{admissible flag}, but we will not consider 
flags without this condition.)  Let $\cL$ be a line bundle on $X$, and let $V\subseteq H^0(X,\cL)$ be a linear system.  For each $m\geq 0$, let $V^m$ be the image of $\Sym^m V$ under the natural map to $H^0(X,\cL^{\otimes m})$.  Thus $R(V) = \bigoplus V^m$ is a subring of the section ring $\bigoplus H^0(X,\cL^{\otimes m})$.  

The valuation $\nu=\nu_{Y_\bullet}$ is defined as follows.  For $f\in V^m$, set $\nu_1 = \nu_1(f) = \ord_{Y_1}(f)$.  Let $y_1$ be a local equation for $Y_1$, and let $f_1$ be the restriction of $f\cdot y_1^{-\nu_1}$ to $Y_1$.  Now set $\nu_2 = \nu_2(f) = \ord_{Y_2}(f_1)$, and repeat, defining $\nu_r$ similarly for each $Y_r$.

\begin{remark}
Appropriately identifying $V$ with a subspace of the field $K=K(X)$ of rational functions on $X$, $\nu$ extends to a valuation on $K$ in the sense of \S\ref{s:vals}.
\end{remark}

When the constructions of \S\ref{s:vals} are applied to this situation, with 
$\nu=\nu_{Y_\bullet}$, we will often write $\Gamma_\nu(V)=\Gamma_{Y_\bullet}(V)$ and $\Delta_\nu(V) = \Delta_{Y_\bullet}(V)$, and call the latter the \define{Okounkov body} associated to $V$.  In this sense, Okounkov bodies 
are special cases of Newton-Okounkov convex bodies.

The following fact will play a key role in the toric degenerations constructed in \S\ref{s:deg}:

\begin{lemma}[{\cite[\S2.3]{ok1}}, {\cite[Lemma 1.3]{lm}}]\label{l:dim1}
Let $V\subseteq H^0(X,\cL)$ be any subspace, and let $Y_\bullet$ be a 
(complete) flag, inducing a valuation $\nu_{Y_\bullet}$.  For $u\in \Z^d$, set 
$V_{\geq u} = \{ f \in V \,|\, \nu_{Y_\bullet}(f)\geq u \}$, and define 
$V_{> u}$ similarly.  Then
\[
  \dim (V_{\geq u}/V_{> u} ) \leq 1.
\]
\qed
\end{lemma}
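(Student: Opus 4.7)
The plan is to exhibit an injective $\kk$-linear map $V_{\geq u}/V_{>u} \hookrightarrow \kk$; this immediately gives the bound.

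First, I would set up local coordinates at $Y_d$. Because $Y_d$ is a nonsingular point and each $Y_r$ is nonsingular at $Y_d$, the local ring $\O_{X,Y_d}$ admits a regular system of parameters $y_1,\ldots,y_d$ with $y_r$ cutting out $Y_r$ inside $Y_{r-1}$ near $Y_d$. After fixing a local trivialization of $\cL$, any $f \in V$ becomes a germ in $\O_{X,Y_d}$.

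Next, for $u = (u_1,\ldots,u_d) \in \Z^d$ I would define a map $\mathrm{lc}_u \colon V_{\geq u} \to \kk$ by running Okounkov's recipe against the fixed vector $u$: set $\phi_0(f) = f$, and inductively $\phi_r(f) = (\phi_{r-1}(f)\cdot y_r^{-u_r})|_{Y_r}$, then put $\mathrm{lc}_u(f) = \phi_d(f) \in \O_{Y_d} = \kk$. A direct unpacking of the lex order shows that for $f \in V_{\geq u}$ each $\phi_r(f)$ is well-defined, and $\mathrm{lc}_u(f)$ is nonzero precisely when $\nu(f) = u$.

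Finally I would verify $\kk$-linearity of $\mathrm{lc}_u$. Scalar multiplication is immediate. For additivity, one checks by induction on $r$ that $\phi_r(f+g) = \phi_r(f) + \phi_r(g)$: at each step, division by $y_r^{u_r}$ (once it is legal) and restriction to $Y_r$ are $\kk$-linear operations on the appropriate spaces of germs, and both $f+g$ and its iterates lie in the domain where the recipe applies. This gives $\ker(\mathrm{lc}_u) = V_{>u}$, hence $V_{\geq u}/V_{>u}$ embeds into $\kk$.

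The main potential pitfall is making sure the recipe is genuinely well-defined for every $f \in V_{\geq u}$: at each intermediate step, $\phi_{r-1}(f)$ should either vanish identically or vanish to order at least $u_r$ along $Y_r$, so that dividing by $y_r^{u_r}$ and restricting makes sense. This follows directly from $\nu(f) \geq u$ in the lexicographic order, together with the basic valuation property \eqref{e:valcond}: whenever $\phi_{r-1}(f)$ is nonzero, one must have had $\nu_s(f) = u_s$ for all $s < r$, which by lex comparison forces $\nu_r(f) \geq u_r$. This bookkeeping---and the observation that once $\phi_{r-1}(f) = 0$ the subsequent $\phi_s(f)$ are automatically zero---is the only substantive point in the argument.
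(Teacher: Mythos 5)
Your proof is correct, and it is the standard ``leading coefficient'' argument that appears in the cited sources; the paper itself states this lemma without proof (with a \qed), deferring to \cite{ok1} and \cite{lm}. One minor remark: the well-definedness of $\phi_r(f)$ for $f \in V_{\geq u}$ uses only lexicographic bookkeeping (once $\nu_s(f)=u_s$ for $s<r$, lex comparison gives $\nu_r(f)\geq u_r$), and the additivity of $\mathrm{lc}_u$ uses only the superadditivity $\nu(f+g)\geq\min\{\nu(f),\nu(g)\}$; the consequence \eqref{e:valcond} you invoke is not actually needed at either point, though this does not affect the validity of the argument.
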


In this lemma, $X$ need not be projective, and $V$ need not be finite-dimensional.  However, the claim is particular to valuations arising from complete flags $Y_\bullet$; the analogous statement does not hold for the zero valuation, for example, or for any valuation defined by an incomplete flag \cite[Remark 1.4]{lm}.

\begin{remark}
In \cite{lm}, arbitrary \emph{graded linear series} are considered.  These are spaces of sections $V_\bullet$, with $V_m \subseteq H^0(X,\cL^{\otimes m})$, satisfying $V_\ell \cdot V_m \subseteq V_{\ell+m}$.  In this generality, one can construct any Newton-Okounkov body as an Okounkov body for some graded linear series, so the two perspectives are essentially equivalent, at least for valuations arising from complete flags.  Here, however, we will only use linear series which are 
``generated in degree $1$''---that is, $V_m = V^m$.
\end{remark}

%%%%%%%%%%%%%%%%%%%%%%%%%%%%%%%%%%%%%%%%%%%%%%%%%%%%%%%%%%%%%%%%%%%%%%%
\section{Maximal divisors and finite generation}\label{s:conditions}
%%%%%%%%%%%%%%%%%%%%%%%%%%%%%%%%%%%%%%%%%%%%%%%%%%%%%%%%%%%%%%%%%%%%%%%

Continuing the notation of \S\ref{s:ok}, let $Y_\bullet$ be a flag in $X$, and let $\nu=\nu_{Y_\bullet}$ be the associated valuation.  Here we give several criteria for the semigroup $\Gamma_\nu(V)$ to be finitely generated.  We first set up some further notation for use in this section.

For $0\leq r\leq d$, let $p_r:\Z^d \to \Z^r$ be the projection on the first $r$ coordinates, and let $\nu|_r = (\nu_1,\ldots,\nu_r)$ be the composition of $\nu$ with $p_r$.  For a linear system $V$, let
\[
  \Gamma_\nu|_r(V) = \{ (m,\nu|_r(f)) \,|\, f\in V^m \} = (\id\times p_r)(\Gamma_\nu(V)) \subset \Z \times \Z^r.
\]

We will also need some notation for certain restricted linear systems, using a construction from \cite{jow}.  Given an effective divisor $E$, consider the map $H^0(X, \cL\otimes\O(-E)) \to H^0(X,\cL)$ arising from the section of $\O(E)$ defining $E$.  Write $V-E \subseteq H^0(X,\cL\otimes\O(-E))$ for the inverse image of $V$ under this map.

Now fix a flag $Y_\bullet$.  Given an $r$-tuple of integers $\ba=(a_1,\ldots,a_r)$, define
\[
  V(\ba) \subseteq H^0(Y_r,\cL\otimes\O(-a_1 Y_1 - \cdots - a_r Y_r)|_{Y_r})
\]
inductively as the image of $V(a_1,\ldots,a_{r-1})-a_r Y_r \subseteq H^0(Y_{r-1},\cL\otimes\O(-a_1 Y_1 - \cdots - a_{r-1} Y_{r-1})|_{Y_{r-1}}\otimes \O(-a_r Y_r))$ under the restriction map from $Y_{r-1}$ to $Y_r$.

Finally, a linear system $V$ is said to be \emph{saturated} with respect to a divisor $Y$ if the set $\{ \ord_Y(f) \,|\, f\in V \}$ is a complete interval, i.e., it is equal to $[a_{\mathrm{min}},a_{\mathrm{max}}] \subset \Z$.

\medskip
Our first criterion for finite generation is simple, but useful; its proof is immediate from Lemma~\ref{l:fg}.

\begin{proposition}\label{p:fg-easy}
Suppose the Okounkov body $\Delta_{Y_\bullet}(V)$ is equal to the convex hull of the lattice points $\nu(V) \subseteq \Z^d$.  Then $\Gamma_\nu(V)$ is finitely generated. \qed
\end{proposition}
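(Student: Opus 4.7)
The plan is to invoke Lemma~\ref{l:fg} directly. That lemma asserts that a graded semigroup $\Gamma \subseteq \N\times\N^d$ is finitely generated as soon as $\Cone(\Gamma)$ is generated by $\Gamma \cap (\{1\}\times\N^d)$, so the task reduces to verifying this cone condition for $\Gamma = \Gamma_\nu(V)$.

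First I would note that $\Gamma_\nu(V)$ really does land in $\N\times\N^d$: the valuation $\nu = \nu_{Y_\bullet}$ is built from successive orders of vanishing along the subvarieties $Y_i$, and on nonzero sections of $\cL^{\otimes m}$ these orders are nonnegative. Next, because $\Gamma_\nu(V)$ is graded, $\Cone(\Gamma_\nu(V))$ is a genuine cone over its degree-one slice, which by definition is $\{1\}\times \Delta_{Y_\bullet}(V)$. The hypothesis says that $\Delta_{Y_\bullet}(V)$ equals the convex hull of the lattice points $\nu(V) \subseteq \N^d$, and each such point contributes $(1,\nu(f)) \in \Gamma_\nu(V)\cap(\{1\}\times\N^d)$ since the elements $f$ lie in $V = V^1$. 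Combining these two observations, $\Cone(\Gamma_\nu(V))$ is the cone over the convex hull of $\Gamma_\nu(V)\cap(\{1\}\times\N^d)$, and Lemma~\ref{l:fg} finishes the argument.

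There is no substantive obstacle; the entire proposition amounts to unwinding definitions until Lemma~\ref{l:fg} applies. The only subtlety worth flagging is finiteness of the set $\nu(V)$, which ensures that its convex hull is actually a polytope so the hypothesis is nonvacuous. This is automatic, however, since $V$ is finite-dimensional and Lemma~\ref{l:dim1} shows that distinct valuation vectors $u\in \nu(V)$ contribute linearly independent one-dimensional subquotients $V_{\geq u}/V_{>u}$ of $V$.
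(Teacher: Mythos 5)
Your proof is correct and matches the paper's approach: the paper explicitly states that the proposition follows immediately from Lemma~\ref{l:fg}, which is exactly the reduction you carry out. Your unwinding of the definitions (identifying $\Cone(\Gamma_\nu(V))$ with the cone over $\{1\}\times\Delta_{Y_\bullet}(V)$ and noting that the hypothesis forces the degree-one slice to be generated by $\Gamma_\nu(V)\cap(\{1\}\times\N^d)$) is the intended argument, and the side remark on finiteness of $\nu(V)$ via Lemma~\ref{l:dim1} is a correct, if optional, clarification.
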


\begin{example}\label{ex:easy-curve}
Let $X$ be a smooth projective curve of genus $g$, with the flag $Y_\bullet$ given by $X\ni P$ for a point $P$.  Let $\cL$ be a line bundle of degree $n\geq 2g+1$.  Then the Okounkov body $\Delta_{Y_\bullet}(V)$ is the interval $[0,n]$, so the hypothesis of Proposition~\ref{p:fg-easy} holds (for the complete linear system) exactly when there is a section vanishing to order $n$ at $P$, that is, when $\cL\isom \O(nP)$.

In fact, a partial converse to the proposition holds in this case: $\Gamma_\nu(V)$ is (finitely) generated by elements of degree $1$ only if $\cL\isom\O(nP)$.
\end{example}

To generalize the situation of Example~\ref{ex:easy-curve} to higher dimensions, we introduce some terminology.  Recall that the \emph{cone of pseudoeffective divisors} on a variety $X$ is the closed convex hull of all numerical classes of effective divisors in the N\'eron-Severi space $N^1(X)_\R$ (see \cite[\S2]{rlaz}).  A \emph{facet} of a convex cone is the intersection of the cone with a supporting hyperplane.

\begin{definition}\label{Dmax}
Let $V\subseteq H^0(X,\cL)$ be a linear system, and fix a prime divisor $Y\subseteq X$.  Consider an effective divisor $aY + b_1B_1 + \cdots + b_kB_k + e_1E_1+\cdots+e_\ell E_\ell$ in $V$, written as a sum of irreducible components with multiplicities.  The divisor is \define{maximal (in $V$ with respect to $Y$)} if each component $b_iB_i$ is contained in the base scheme of $V$, and the classes of the $E_i$ lie in a facet of the pseudoeffective cone $\Eff(X)$ not containing the class of $Y$.  
\end{definition}

\begin{example}\label{ex:1}
Let $X$ be an elliptic curve, let $P\in X$ be a point, and let $V=H^0(X,\O(3P))$.  Then $P$ is maximal with respect to $P'$ if and only if $P'$ is an inflection point for the embedding by $3P$, i.e., if $\O(3P') \isom \O(3P)$.  (Otherwise, $\O(3P)\isom \O(2P' + Q')$ for some point $Q'\neq P'$, and $2P' + Q'$ is not maximal, since $P'\equiv Q'$ in $N^1(X)$.)  On the other hand, let $P'$ be a general point, and suppose $V\subseteq H^0(X,\O(3P))=H^0(X,\O(2P'+Q'))$ is the subspace of sections vanishing at $Q'$.  Then $2P'+Q'$ is maximal in $V$ with respect to $P'$.
\end{example}

\begin{example}\label{ex:2}
Let $X = C\times C$ be a product of elliptic curves.  Choose points $p_1,p_2\in C$, and write $C_1 = \{p_1\}\times C$ and $C_2 = C\times \{p_2\}$.  The divisor $3C_1+3C_2$ is maximal in $V=H^0(X,\O(3C_1+3C_2))$ with respect to $C_1$ or $C_2$.  (Here the cone $\Eff(X)$ is equal to the nef cone.  This is a round cone bounded by classes $\alpha$ such that $(\alpha^2)=0$, so the facets are just the extremal rays.  Since $(C_2^2)=0$, its class lies on a facet not containing $C_1$.)

On the other hand, let $V=H^0(X,\O(3C_1+3C_2))$ as before, and let $\delta \subseteq X$ be the diagonal.  One can find divisors $\delta + D$ in $V$, for some effective $D$, but these are not maximal with respect to $\delta$.  (Indeed, $D\equiv 3C_1+3C_2 - \delta$ in $N^1(X)$, so $(D^2)=3>0$ and therefore the components of $D$ do not lie on a single facet of $\Eff(X)$.)  In the linear system $V$, there are no effective divisors of order greater than $1$ along $\delta$, since $2\delta + D \equiv 3C_1+3C_2$ implies $(D^2) = -3 < 0$.  Thus there are no maximal divisors in $V$ with respect to $\delta$.
\end{example}

We now record some basic facts about maximal divisors.  The nomenclature is justified by the first lemma:

\begin{lemma}\label{l:maxl}
Let $D = a_1D_1 + \cdots + a_k D_k$ be an effective divisor with irreducible components $D_i$, which is maximal in $V\subseteq H^0(X,D)$ with respect to $D_1$; and let $t\in V$ be any section, defining a linearly equivalent effective divisor $b_1 D_1 + E$.  Then $b_1\leq a_1$.
\end{lemma}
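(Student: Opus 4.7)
The plan is to reduce the claim to an extremality property of the facet $\mathcal{F}$ appearing in Definition~\ref{Dmax}. By that definition, I write $D = a_1 D_1 + B + E$, where $B = \sum_i b_i' B_i$ has each $B_i$ contained in the base scheme of $V$ to order at least $b_i'$, and $E = \sum_j e_j E_j$ has each class $[E_j]$ lying on a fixed facet $\mathcal{F}$ of $\Eff(X)$ that does not contain $[D_1]$. Since $t \in V \subseteq H^0(X, \O(D))$, its divisor $\mathrm{div}(t)$ is linearly equivalent to $D$.

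Next I write $\mathrm{div}(t) = b_1 D_1 + F$, where $F$ has no $D_1$-component. Because every section of $V$ vanishes to order at least $b_i'$ along each $B_i$, we have $F \geq B$; set $F = B + F'$ with $F' \geq 0$. Combining with $\mathrm{div}(t) \sim D$ and cancelling the common part $B$, I obtain $F' \sim (a_1 - b_1) D_1 + E$, which passes to a numerical equality $[F'] + (b_1 - a_1)[D_1] = [E]$ in $N^1(X)_\R$.

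Now suppose for contradiction that $b_1 > a_1$. The right-hand side $[E] = \sum_j e_j [E_j]$ lies on $\mathcal{F}$, since each $[E_j]$ does and $\mathcal{F}$, as a facet of a convex cone, is closed under nonnegative linear combinations. Let $H$ be a supporting hyperplane of $\Eff(X)$ with $\mathcal{F} = H \cap \Eff(X)$. Then $[F']$ and $(b_1 - a_1)[D_1]$ both lie in $\Eff(X)$, hence weakly on one side of $H$; their sum $[E]$ lies on $H$, which forces each summand onto $H$ and therefore onto $\mathcal{F}$. Since $b_1 - a_1 > 0$, this places $[D_1] \in \mathcal{F}$, contradicting the defining property of the facet.

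The only subtle input is reading ``each component $b_i B_i$ is contained in the base scheme of $V$'' in Definition~\ref{Dmax} as: every section of $V$ vanishes to order at least $b_i$ along $B_i$, which is exactly what justifies the inequality $F \geq B$. After that, the argument is a clean application of the standard extremality property of a facet of a convex cone---a supporting hyperplane cannot cut through a decomposition of one of its boundary classes into cone elements---carried out in $N^1(X)_\R$. I do not anticipate any genuine obstacle.
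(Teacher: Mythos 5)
Your proof is correct and follows the same essential strategy as the paper's: decompose $D$ per Definition~\ref{Dmax}, peel off the base-locus contribution from $\mathrm{div}(t)$, and apply a supporting-hyperplane argument for the facet $\mathcal{F}$. The only organizational difference is that you absorb the base-locus excess directly into the effective remainder $F'$ and inline the hyperplane computation, whereas the paper keeps the deficits $a_i - b_i \leq 0$ as explicit coefficients and invokes the general Lemma~\ref{l:pointed-cone}; both amount to the same estimate.
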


\begin{proof}
Suppose $D_2,\ldots,D_j$ are the components of $D$ with $a_iD_i \subseteq \Bs(V)$.  Since $a_2 D_2 + \cdots + a_j D_j \subseteq \Bs(V)$, we can write $E=b_2D_2 + \cdots + b_jD_j + E'$, with $b_i\geq a_i$.  The effective divisor $E'$ is linearly equivalent to $(a_1-b_1)D_1 + (a_2-b_2)D_2 + \cdots + (a_j-b_j)D_j + a_{j+1}D_{j+1} + \cdots + a_kD_k$.  The claim follows from Lemma~\ref{l:pointed-cone} below, using the assumption that $D$ is maximal and the fact that $\Eff(X)$ is a pointed cone \cite[Lemma~4.6]{lm}.
\end{proof}

\begin{lemma}\label{l:pointed-cone}
Let $\sigma$ be a pointed convex cone in a (real or rational) vector space.  Suppose $v =cu + \sum d_i v_i + \sum e_j w_j$ lies in $\sigma$, along with the vectors $u$, $v_i$, and $w_j$.  If the $w_j$ lie in a facet of $\sigma$ not containing $u$, and all $d_i\leq 0$, then $c\geq 0$.
\end{lemma}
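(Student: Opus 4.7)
The plan is to separate the facet containing the $w_j$ from the rest of the cone by a supporting hyperplane, and then evaluate the given linear relation on the corresponding linear functional. Because the hypothesis ``pointed cone'' was never really used in the obvious way — it just guarantees that the facet is cut out by a genuine supporting hyperplane — the proof reduces to a sign check.

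Concretely, I would proceed as follows. Let $F$ be the facet of $\sigma$ that contains every $w_j$ but not $u$. By definition of a facet, there is a linear functional $\ell$ on the ambient vector space such that $\ell \geq 0$ on all of $\sigma$ and $F = \sigma \cap \{\ell = 0\}$. Since $u \in \sigma$ and $u \notin F$, we have $\ell(u) > 0$; since each $w_j \in F$, we have $\ell(w_j) = 0$; and since the $v_i$ lie in $\sigma$, we have $\ell(v_i) \geq 0$. The vector $v$ also lies in $\sigma$, so $\ell(v) \geq 0$.

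Next, apply $\ell$ to the relation $v = cu + \sum d_i v_i + \sum e_j w_j$. The $w_j$-terms vanish, giving
\[
  \ell(v) = c\,\ell(u) + \sum d_i\,\ell(v_i).
\]
Since $d_i \leq 0$ and $\ell(v_i) \geq 0$, each term $d_i\,\ell(v_i)$ is $\leq 0$, so $-\sum d_i\,\ell(v_i) \geq 0$. Combined with $\ell(v) \geq 0$, this yields $c\,\ell(u) \geq 0$. Dividing by $\ell(u) > 0$ gives $c \geq 0$.

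The only ``hard'' point is really a bookkeeping one: justifying the existence of the separating functional $\ell$ with the three properties $\ell \geq 0$ on $\sigma$, $\ell \equiv 0$ on $F$, and $\ell(u) > 0$. This is standard convex geometry — a facet of a convex cone is precisely the intersection with such a supporting hyperplane, and the complement of a facet in a pointed cone lies in the open half-space $\{\ell > 0\}$ — so no genuine obstacle arises, and the rest is the sign chase above.
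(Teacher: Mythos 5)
Your proof is correct and takes essentially the same approach as the paper: choose a supporting linear functional for the facet, apply it to the relation, and do the sign chase using $\ell(u)>0$, $\ell(w_j)=0$, $\ell(v_i)\geq 0$, $\ell(v)\geq 0$.
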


\begin{proof}
Take a functional $\phi$ defining a hyperplane supporting the facet containing the $w_j$, so $\sigma \subseteq \{ \phi \geq 0\}$ and $\phi(w_i)=0$ for all $j$.  Then $\phi(v) = c\phi(u) + \sum d_i \phi(v_i) \geq 0$ implies $c\geq 0$.
\end{proof}

The next lemma is an immediate consequence of the additivity properties of base loci:

\begin{lemma}\label{l:max-sum}
Fix prime divisors $D_1,\ldots,D_k$.  Suppose $D=a_1D_1 + \cdots + a_k D_k$ is maximal in $V\subseteq H^0(X,D)$ and $D'=a'_1D_1 + \cdots + a'_k D_k$ is maximal in $V'\subseteq H^0(X,D')$, both with respect to $D_1$.  Then $D+D'$ is maximal in $V\cdot V'\subseteq H^0(X,D+D')$ with respect to $D_1$.  In particular, for any section $t\in V\cdot V'$, we have $\ord_{D_1}(t)\leq a_1+a'_1$. \qed
\end{lemma}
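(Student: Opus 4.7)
The plan is to verify the two conditions of Definition~\ref{Dmax} for $D + D'$ in $V \cdot V'$ with respect to $D_1$, from which the ``In particular'' clause will follow by applying Lemma~\ref{l:maxl} inside $V \cdot V'$.

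I would first use the given maximality to write
\[
  D = a_1 D_1 + B + F, \qquad D' = a'_1 D_1 + B' + F',
\]
where $B$, $B'$ are effective sums of components contained in $\Bs(V)$, $\Bs(V')$ respectively, and $F$, $F'$ are effective sums of components whose classes lie in a facet of $\Eff(X)$ not containing $[D_1]$.  This yields the tentative decomposition $D + D' = (a_1 + a'_1) D_1 + (B + B') + (F + F')$, which I would like to certify as witnessing maximality.

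The base-locus condition is handled by the additivity of fixed divisors: for $s \in V$, $s' \in V'$, and any prime divisor $C$, one has $\ord_C(ss') = \ord_C(s) + \ord_C(s')$, so each product vanishes along $B + B'$ to the prescribed multiplicity.  Since $V \cdot V'$ is spanned by such products and $\ord_C(\sum t_i) \geq \min_i \ord_C(t_i)$, every element of $V \cdot V'$ vanishes along $B + B'$, giving $B + B' \subseteq \Bs(V \cdot V')$.  For the facet condition, each irreducible component of $F + F'$ is some $D_i$ whose class sits on a facet of $\Eff(X)$ omitting $[D_1]$, inherited from the decomposition of $D$ or $D'$.

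The main obstacle is reconciling these facet conditions: a priori, the facets witnessing maximality for $D$ and $D'$ need not coincide, so one must exhibit a single facet of $\Eff(X)$ containing the classes of all components of $F + F'$.  In the settings of interest here --- notably when $D$ and $D'$ share their facet components, or when a common facet is fixed in advance --- this holds automatically.  Granting it, $D + D'$ is maximal in $V \cdot V'$ with respect to $D_1$, and Lemma~\ref{l:maxl} supplies the bound $\ord_{D_1}(t) \leq a_1 + a'_1$ for every $t \in V \cdot V'$.
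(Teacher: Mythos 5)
Your argument matches the paper's (the proof is essentially omitted in the paper, which simply notes that the lemma "is an immediate consequence of the additivity properties of base loci"): you decompose $D$ and $D'$ according to Definition~\ref{Dmax}, certify the base-locus part of $D+D'$ by additivity of $\ord_C$ on products of sections and the valuative inequality on sums, and reduce the conclusion to Lemma~\ref{l:maxl}.

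Your caveat about the facets is well-taken, and in fact sharper than anything the paper says. The definition of maximality (and the proof of Lemma~\ref{l:maxl} via Lemma~\ref{l:pointed-cone}) requires all the $E_i$-classes to lie in a \emph{single} facet; the analogue of Lemma~\ref{l:pointed-cone} with each $w_j$ in \emph{some} facet not containing $u$ is false (e.g., in the first octant with $u=(1,1,1)$, $w_1=(0,1,1)$, $w_2=(1,0,1)$, $w_3=(1,1,0)$, one has $0 = -2u + w_1 + w_2 + w_3$). Moreover, it does not help to try to absorb a component $D_i$ (say $B$-type for $D$ with coefficient $a_i$, $F$-type for $D'$ with coefficient $a'_i$) into the base-locus part of $D+D'$, since one only gets $\ord_{D_i}(ss') \geq a_i$, not $\geq a_i + a'_i$. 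So the lemma as stated does implicitly assume the two maximality witnesses can be made to use a common facet. This is automatic in the paper's actual use of the lemma — it is always applied with $V' = V^{m-1}$ and $D' = (m-1)D$, where the facets coincide — but it is an honest hypothesis that the statement leaves unspoken. Your flagging of this point is a genuine improvement in precision over the paper's $\qed$.
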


Now fix a flag $Y_\bullet$.  Suppose $V$ contains a maximal divisor $D$ with respect to $Y_1$, and also contains a section not vanishing along $Y_1$.  It follows immediately from Lemmas~\ref{l:maxl} and \ref{l:max-sum}, together with Lemma~\ref{l:fg}, that the semigroup $\Gamma_1:=\Gamma_\nu|_1(V)$ is finitely generated.  Indeed, $\Cone(\Gamma_1) \subseteq \R \times \R$ is generated by $(1,0)$ and $(1,a)$, where $a$ is the coefficient of $Y_1$ in the maximal divisor $D$.

In order to extend this to the semigroups $\Gamma_v|_r(V)$ for $r>1$, we define some conditions on (restricted) linear systems.

\begin{definition}
Given two linear systems $V \subseteq H^0(X,\cL)$ and $W\subseteq H^0(X,\cM)$, and an integer $r$ with $1\leq r\leq d$, consider the following conditions:
\begin{enumerate}
\item For each $\ba \in \nu|_{r-1}(V)$, $V(\ba)$ contains a maximal divisor, as well as a section that does not vanish at the point $Y_d$; and similarly for each $\bb \in \nu|_{r-1}(W)$. \label{sigma:maximal}

\medskip
\item For each $\bc \in \nu|_{r-1}(V) + \nu|_{r-1}(W)$, there is a divisor in $V(\ba)\cdot W(\bb)$ which is maximal in $(V\cdot W)(\bc)$, for some $\ba+\bb=\bc$. \label{sigma:additive}

\medskip
\item For each $\ba \in \nu|_{r-1}(V)$ and $\bb \in \nu|_{r-1}(W)$, the linear systems $V(\ba)$ and $W(\bb)$ are saturated with respect to $Y_r$. \label{sigma:saturated}
\end{enumerate}
We say \define{condition $\cC_r(V,W)$ holds} if \eqref{sigma:maximal} and \eqref{sigma:additive} hold.  We say the \define{condition $\cD_r(V,W)$ holds} if \eqref{sigma:maximal}, \eqref{sigma:additive}, \eqref{sigma:saturated}, and $\cD_{r-1}(V,W)$ hold. 
\end{definition}

\begin{proposition}\label{p:plus}
Suppose $\cC_r(V,W)$ holds, and $\nu|_{r-1}(V\cdot W) = \nu|_{r-1}(V)+\nu|_{r-1}(W)$.  Then $\Conv(\nu|_r(V\cdot W)) = \Conv(\nu|_r(V)) + \Conv(\nu|_r(W))$.

Suppose, additionally, that the saturation condition \eqref{sigma:saturated} holds.  
Then $\nu|_r(V\cdot W) = \nu|_r(V) + \nu|_r(W)$.
\end{proposition}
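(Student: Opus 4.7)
My plan is to prove both inclusions of each equality: the reverse inclusion follows immediately from $\nu$ being a valuation, while the forward inclusion rests on the maximal-divisor apparatus encoded in $\cC_r(V,W)$. The easy direction is identical in both parts: since $\nu(fg) = \nu(f) + \nu(g)$, one has $\nu|_r(V) + \nu|_r(W) \subseteq \nu|_r(V \cdot W)$, and passing to convex hulls via the standard identity $\Conv(A) + \Conv(B) = \Conv(A + B)$ settles the first statement's reverse inclusion.

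For the harder direction of the first statement, fix $(\bc, c) \in \nu|_r(V \cdot W)$. The hypothesis gives $\bc \in \nu|_{r-1}(V) + \nu|_{r-1}(W)$, and condition (\ref{sigma:additive}) of $\cC_r(V,W)$ produces a decomposition $\bc = \ba + \bb$, with $\ba \in \nu|_{r-1}(V)$ and $\bb \in \nu|_{r-1}(W)$, such that $(V \cdot W)(\bc)$ has a maximal divisor that factors as a product of an element of $V(\ba)$ with one of $W(\bb)$. Write $m_V(\ba)$, $m_W(\bb)$, and $m_{VW}(\bc)$ for the maxima of $\nu_r$ on the respective restricted linear systems. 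Lemma~\ref{l:maxl} identifies each with the coefficient of $Y_r$ in the relevant maximal divisor, and combined with the additivity of $\nu_r$ on the factored product this gives the key identity $m_{VW}(\bc) = m_V(\ba) + m_W(\bb)$. Condition (\ref{sigma:maximal}) also furnishes sections of $V(\ba)$ and $W(\bb)$ not vanishing at $Y_d$, hence having $\nu_r = 0$. Thus both $(\ba, 0)$ and $(\ba, m_V(\ba))$ lie in $\nu|_r(V)$, and similarly for $W$, so $\Conv(\nu|_r(V))$ contains the segment $\{\ba\} \times [0, m_V(\ba)]$. Adding this to the analogous segment for $W$ yields $\{\bc\} \times [0, m_{VW}(\bc)]$, and since $c \leq m_{VW}(\bc)$ by Lemma~\ref{l:maxl}, the point $(\bc, c)$ lies in it.

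The second statement follows by strengthening the two endpoints to the full integer interval: the saturation condition (\ref{sigma:saturated}), together with the $\nu_r = 0$ section from (\ref{sigma:maximal}), implies $\nu_r(V(\ba)) = \{0, 1, \ldots, m_V(\ba)\}$ (and analogously for $W(\bb)$), so every $(\ba, j)$ with $0 \leq j \leq m_V(\ba)$ lies in $\nu|_r(V)$. Given integer $c$ with $0 \leq c \leq m_V(\ba) + m_W(\bb)$, one can split $c = j + k$ with $0 \leq j \leq m_V(\ba)$ and $0 \leq k \leq m_W(\bb)$, realizing $(\bc, c) = (\ba, j) + (\bb, k)$ as an element of $\nu|_r(V) + \nu|_r(W)$.

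The main obstacle is the identity $m_{VW}(\bc) = m_V(\ba) + m_W(\bb)$: taking products of maximizers for $V(\ba)$ and $W(\bb)$ gives the inequality $\leq$ freely (they produce an element of $(V\cdot W)(\bc)$ of the right $\nu_r$-order), but the reverse inequality is precisely the content of condition (\ref{sigma:additive}), which supplies a maximal divisor of $(V \cdot W)(\bc)$ admitting an explicit factorization through $V(\ba) \cdot W(\bb)$.
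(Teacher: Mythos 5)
Your proof is correct and takes essentially the same approach as the paper's: decompose $\bc = \ba + \bb$ via condition \eqref{sigma:additive}, use Lemma~\ref{l:maxl} to bound $c_r$ by $a_r^{\mx} + b_r^{\mx}$, and invoke the saturation condition \eqref{sigma:saturated} together with the order-zero section from \eqref{sigma:maximal} to fill in the integer interval for the second statement. The only differences are expository: you spell out the easy reverse containment and the segment-plus-segment Minkowski argument that the paper leaves implicit, and you phrase the key order computation via a literal factorization $D = D_1 + D_2$ of the maximal divisor, whereas the paper treats $D$ only as an element of the product system $V(\ba)\cdot W(\bb)$ and reaches the same equality $\ord_{Y_r}(D) = a_r^{\mx} + b_r^{\mx}$ by combining Lemmas~\ref{l:maxl} and~\ref{l:max-sum}.
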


\begin{proof}
Given $(c_1,\ldots,c_r) \in \nu|_r(V\cdot W)$, write $\bc=(c_1,\ldots,c_{r-1})$.  Find $\ba\in\nu|_{r-1}(V)$ and $\bb\in\nu|_{r-1}(W)$ such that $\ba+\bb=\bc$, and such that $V(\ba)\cdot W(\bb)$ contains a divisor which is maximal in $(V\cdot W)(\bc)$.  Such a divisor is also maximal in $V(\ba)\cdot W(\bb)$, and therefore its order of vanishing along $Y_r$ is equal to $a_r^\mx + b_r^\mx$, where $a_r^\mx$ and $b_r^\mx$ are the orders of maximal divisors in $V(\ba)$ and $W(\bb)$, respectively.  It follows that $c_r \leq c_r^\mx := a_r^\mx + b_r^\mx$, proving the first statement.

If $V(\ba)$ and $W(\bb)$ are saturated, then for each $c_r$ in the interval $[0,c_r^\mx]$, there are sections $s \in V(\ba)$ and $t \in W(\bb)$ with $\ord_{Y_r}(s)=a_r$, $\ord_{Y_r}(t)=b_r$, and $a_r+b_r=c_r$.  These correspond to points $(\ba,a_r) \in \nu|_r(V)$ and $(\bb,b_r) \in \nu|_r(W)$ with $(\ba,a_r)+(\bb,b_r)=(\bc,c_r)$, so the second statement follows.
\end{proof}

From the inductive nature of condition $\cD_r(V,W)$, we have the following:

\begin{corollary}\label{c:cond-D}
Suppose $\cD_r(V,V^m)$ holds for all $m\geq 1$.  Then $\Gamma_\nu|_r(V)$ is generated by $\{1\}\times\nu|_r(V)$.  In particular, if $r=d$, then the Okounkov body $\Delta_{Y_\bullet}(V)$ is equal to the lattice polytope $\Conv(\nu(V))$. \qed
\end{corollary}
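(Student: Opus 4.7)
I will prove by a double induction (outer on $r$, inner on $m$) the identity
\[
  \nu|_r(V^m) = \underbrace{\nu|_r(V) + \cdots + \nu|_r(V)}_{m\text{ summands (Minkowski sum)}}
\]
for every $m\ge 1$. Once this is in hand, a general element $(m,u)$ of $\Gamma_\nu|_r(V) = \bigcup_m \{m\}\times\nu|_r(V^m)$ decomposes as a sum of $m$ elements of $\{1\}\times\nu|_r(V)$, so the semigroup is generated by its degree-one part.

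The outer base case $r=0$ is trivial, since $\nu|_0$ has image $\{0\}$. For the inductive step, assume the outer hypothesis for $r-1$. Because $\cD_r(V,V^k)$ is given for all $k\ge 1$ and, by definition, includes $\cD_{r-1}(V,V^k)$, applying the outer hypothesis to the system $V$ at level $r-1$ gives $\nu|_{r-1}(V^k) = k\cdot\nu|_{r-1}(V)$ for every $k$.

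Now I run the inner induction on $m$. The base $m=1$ is tautological. For $m\ge 2$, I apply the second statement of Proposition~\ref{p:plus} to $V$ and $W:=V^{m-1}$. Its three hypotheses are in place: condition $\cC_r(V,V^{m-1})$ is part of $\cD_r(V,V^{m-1})$; the saturation condition~\eqref{sigma:saturated} is also part of $\cD_r(V,V^{m-1})$; and the equality
\[
  \nu|_{r-1}(V\cdot V^{m-1}) = \nu|_{r-1}(V) + \nu|_{r-1}(V^{m-1})
\]
follows from the outer-level identity applied at $k=m$ and $k=m-1$ (since $V\cdot V^{m-1} = V^m$). The proposition then yields $\nu|_r(V^m) = \nu|_r(V) + \nu|_r(V^{m-1})$, which combines with the inner inductive hypothesis to close the step.

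For the ``in particular'' clause, set $r=d$. The set $\nu(V)\subseteq\Z^d$ is finite, of cardinality at most $\dim V$ by Lemma~\ref{l:dim1}. By the first part of the corollary, $\Cone(V)$ is the closed convex cone generated by the finite set $\{1\}\times\nu(V)$, so intersecting with $\{1\}\times\R^d$ gives $\Delta_{Y_\bullet}(V) = \Conv(\nu(V))$, a lattice polytope. The main obstacle is purely organizational: making sure that at every stage all three hypotheses of Proposition~\ref{p:plus} are genuinely available, with the $(r-1)$-level Minkowski identity supplied by the outer induction and the other two extracted from the definition of $\cD_r$.
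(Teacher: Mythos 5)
Your proof is correct and fills in exactly the inductive argument the paper gestures at with the phrase ``From the inductive nature of condition $\cD_r(V,W)$.'' The double induction — outer on $r$ using the nested structure $\cD_r \supseteq \cD_{r-1}$, inner on $m$ using the second statement of Proposition~\ref{p:plus} with $W = V^{m-1}$ — is the intended route, and you have correctly tracked all three hypotheses of Proposition~\ref{p:plus} at each stage.
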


%%%%%%%%%%%%%%%%%%%%%%%%%%%%%%%%%%%%%%%%%%%%%%%%%%%%%%%%%%%%%%%%%%%%%%%
\section{Degenerations}\label{s:deg}
%%%%%%%%%%%%%%%%%%%%%%%%%%%%%%%%%%%%%%%%%%%%%%%%%%%%%%%%%%%%%%%%%%%%%%%

Return to the general setup of \S\ref{s:vals}.  We have a filtration of $R=R(V)$ indexed by $\Gamma=\Gamma_\nu(V)$, since $R_{\leq (m,u)}\subseteq R_{\leq (m',u')}$ whenever $(m,u)\leq (m',u')$.  Let
\[
  \gr R = \bigoplus_{(m,u)\in \Gamma} R_{\leq (m,u)}/R_{<(m,u)}
\]
be the associated graded; this is a $\Gamma$-graded ring.  Our first observation is that the degenerations constructed in \cite{caldero} and \cite{ab} generalize to this setting.

\begin{proposition}\label{p:ring-deg}
Let $R = R(V)$, and assume that $\gr R$ is finitely generated (so $\Gamma=\Gamma_\nu(V)$ is also finitely generated).  Then there is a finitely generated, $\N$-graded, flat $\kk[t]$-subalegbra $\RR\subseteq R[t]$, such that
\begin{itemize}
\item $\RR/t\RR \isom \gr R$ and 
\item $\RR[t^{-1}] \isom R[t,t^{-1}]$ as $\kk[t,t^{-1}]$-algebras.
\end{itemize}

More specifically, there is a linear projection $\pi:\Z\times\Z^d \to \Z$, inducing an $\N$-filtration $R_{\leq k} \subseteq R$ whose associated graded is $\gr R$.  The Rees algebra
\[
  \RR = \bigoplus_{k\geq 0}\left( R_{\leq k}\right) t^k
\]
for this filtration satisfies the required properties.

Finally, the $\N$-grading on $R = \bigoplus V^m$ is compatible with the one 
on $\RR$ (via powers of $t$), so in fact $\RR$ is naturally $(\N\times\N)$-graded.
\end{proposition}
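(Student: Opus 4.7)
The plan is to realize $\RR$ as the Rees algebra of an $\N$-filtration on $R$ induced by a linear functional $\pi$. Finite generation of $\gr R$ provides $\Gamma$-homogeneous generators $\bar f_1,\ldots,\bar f_n\in\gr R$ whose $\hat\nu$-degrees $g_1,\ldots,g_n$ generate the semigroup $\Gamma$; lift them to $f_1,\ldots,f_n\in R$. Since $R=R(V)$ is generated by $V$ in degree $1$, one may augment with a basis of $V$ so that the chosen $f_i$ generate $R$ as a $\kk$-algebra. The central step is to construct $\pi\colon\Z\times\Z^d\to\Z$: assign integer weights $w_i=\pi(g_i)$ to the generators so that, in a presentation $R=\kk[X_1,\ldots,X_n]/I$ with $X_i\mapsto f_i$, the initial ideal of $I$ under the $\pi$-weighting (with $X_i$ of degree $w_i$) equals the defining ideal of $\gr R$. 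By finite generation of both $\gr R$ and the relation ideal $I$, this is a finite combinatorial problem: the existence of such a $\pi$ is the standard Gr\"obner-fan fact, after which $\pi$ extends linearly to all of $\Z\times\Z^d$.

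Given $\pi$, define $R_{\leq k}\subseteq R$ to be the $\kk$-span of monomials $f_{i_1}\cdots f_{i_r}$ with total weight $\sum_j w_{i_j}\leq k$. This is automatically a subspace; it is multiplicative ($R_{\leq k}\cdot R_{\leq k'}\subseteq R_{\leq k+k'}$ by additivity of weights); and it is exhaustive because the $f_i$ generate $R$. Form $\RR:=\bigoplus_{k\geq 0}R_{\leq k}\,t^k\subseteq R[t]$. The standard Rees identity gives $\RR/t\RR\cong\bigoplus_k R_{\leq k}/R_{<k}$, and by the Gr\"obner compatibility of $\pi$, each $k$-graded piece is canonically identified with the direct sum of $\Gamma$-graded pieces of $\gr R$ at degrees $\gamma$ with $\pi(\gamma)=k$, with matching ring structures. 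Hence $\RR/t\RR\cong\gr R$ as rings. Inverting $t$ yields $\RR[t^{-1}]\cong R[t,t^{-1}]$, since any $f\in R$ is a polynomial in the $f_i$'s of bounded weight. Flatness of $\RR$ over $\kk[t]$ reduces to torsion-freeness, which holds because $t$ is a non-zero-divisor in $R[t]\supseteq\RR$. Finite generation of $\RR$ is immediate: it is generated over $\kk[t]$ by $\{f_i t^{w_i}\}$ together with $t$.

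Finally, the $(\N\times\N)$-bigrading on $\RR$ arises from the natural $R$-grading $R=\bigoplus_m V^m$: each monomial $f_{i_1}\cdots f_{i_r}$ has a well-defined $R$-degree $\sum_j m_{i_j}$ (where $m_i$ is the first coordinate of $g_i$), so each $R_{\leq k}$ decomposes as $\bigoplus_m(R_{\leq k}\cap V^m)$, lifting to an $(\N\times\N)$-bigrading on $\RR$ compatible with the Rees grading via powers of $t$. The main obstacle is the construction of $\pi$: finding a single linear weight functional whose associated initial ideal recovers $\gr R$ is the essential content of the Gr\"obner/SAGBI degeneration principle, made tractable here by the finite-generation hypothesis.
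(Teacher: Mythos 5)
Your proposal follows essentially the same route as the paper's proof: lift homogeneous generators of $\gr R$ to generators of $R$, pass to a presentation, identify the kernel of $S\to\gr R$ as the initial ideal of the kernel of $S\to R$, and then build the Rees algebra of the $\N$-filtration induced by a suitable linear functional $\pi$. The one step you treat as a black box ("the standard Gr\"obner-fan fact") is exactly what the paper makes explicit in Lemma~\ref{l:caldero}: the finite set to which it is applied must include not only the differences of (initial, non-initial) degrees of the chosen relations but also the standard generators of $\N\times\N^d$, so that $\pi$ is simultaneously order-preserving on the relevant comparisons and strictly positive on $\N\times\N^d$ (the latter is what makes the resulting filtration an $\N$-filtration, a point your sketch leaves implicit). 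With that detail supplied, your argument is correct and matches the paper's.
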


\noindent
The proposition is proved by imitating the proof of \cite[Proposition 2.2]{ab}, which in turn is based on \cite[\S3.2]{caldero}.  We repeat the arguments here, since we will adapt some of them for Proposition~\ref{p:compatible}.

First, we need a lemma.
\begin{lemma}\label{l:caldero}
Let $\mathcal{S}$ be a finite subset of $\Z\times\Z^d$.  Then there is a linear projection $\pi\colon\Z\times\Z^d \to \Z$ such that $\pi(m,u)<\pi(n,v)$ whenever $(m,u)<(n,v)$ in $\mathcal{S}$ (using the modified lexicographic order on $\Z\times\Z^d$, from \S\ref{s:vals}).
\end{lemma}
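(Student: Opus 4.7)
The plan is to produce $\pi$ explicitly as a weighted linear functional of the form
\[
  \pi(m,u) = N\,m - \ell(u),\qquad \ell(u) = c_1 u_1 + c_2 u_2 + \cdots + c_d u_d,
\]
with positive integer weights $c_1,\dots,c_d$ and a large integer $N$ chosen in that order.  The sign flip in front of $\ell$ accounts for the fact that the modified order is \emph{reverse} lex on $\Z^d$ at each fixed value of the first coordinate, while the dominant weight $N$ on $m$ takes care of the case where first coordinates differ.

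First I would pick a uniform bound $M$ so that every coordinate of every element of the finite set $\mathcal{S}$ satisfies $|u_i|\le M$; in particular any two elements differ by at most $2M$ in any coordinate.  Then I would pick the weights inductively from the bottom up: set $c_d = 1$ and choose each $c_i$ so that $c_i > 2M(c_{i+1} + \cdots + c_d)$.  A routine estimate shows that for $u\neq v$ inside the box $\{|u_j|\le M\}$, if $u$ is strictly lex-greater than $v$ and $i$ is the first coordinate where they differ, then
\[
  \ell(u) - \ell(v) \;=\; c_i(u_i - v_i) + \sum_{j>i} c_j(u_j - v_j) \;\ge\; c_i - 2M\sum_{j>i}c_j \;>\;0,
\]
so $\ell$ strictly respects lex order on $\mathcal{S}$.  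Finally, choose $N > 2\max_{(m,u)\in\mathcal{S}}|\ell(u)|$ so that the $Nm$-term dominates $\ell$ on differences of elements of $\mathcal{S}$.

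The verification is then immediate from the two cases of the modified order.  If $(m,u)<(n,v)$ with $m<n$, then
\[
  \pi(n,v) - \pi(m,u) = N(n-m) + \ell(u) - \ell(v) \;>\; 0
\]
by the choice of $N$; if instead $m=n$ and $u$ is strictly lex-greater than $v$, then $\pi(n,v) - \pi(m,u) = \ell(u) - \ell(v) > 0$ by the tower inequality on the $c_i$.  The only real content is the bookkeeping to ensure that each $c_i u_i$ strictly dominates the combined contribution $c_{i+1}u_{i+1} + \cdots + c_d u_d$ on the finite box, and this is elementary, so I expect no genuine obstacle; the role of finiteness of $\mathcal{S}$ is simply to provide the bound $M$ (and thus make the weights well-defined integers).
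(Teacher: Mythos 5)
Your proof is correct and follows essentially the same approach as the paper's: choose a dominant weight for the first coordinate and a tower of rapidly decreasing (in the lex order) weights $c_1 \gg c_2 \gg \cdots \gg c_d > 0$ for the remaining coordinates, with signs flipped to account for the reverse-lex order, and use the finiteness of $\mathcal{S}$ to bound coordinate differences. The paper packages this as a single tower condition $\alpha_k > C(\alpha_{k+1}+\cdots+\alpha_d)$ for all $k \ge 0$, whereas you build the $c_i$ from the bottom up and then choose $N$ last, but these are the same argument.
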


\noindent
This is a standard fact, and can be found in \cite[Proposition~1.8]{bayer} or \cite[Lemma~3.2]{caldero}.  Its proof is simple:

\begin{proof}
Let $C$ be a positive integer larger than all coordinates of $(m,u)-(n,v)$, for every pair 
of elements $(m,u),(n,v)\in\mathcal{S}$, and let $\alpha_0,\ldots,\alpha_d$ be chosen 
so that $\alpha_k > C(\alpha_{k+1} + \cdots + \alpha_d)$ for each $k\geq 0$.  Then 
$\pi = \alpha_0 e_0^* - \sum_{i=1}^d \alpha_i e_i^*$ does the trick, where $e_i^*:\Z\times\Z^d \to \Z$ is the $i$th coordinate.
\end{proof}

We now prove the proposition.  Simultaneously, we will give a presentation for the ring $\RR$.

\begin{proof}[Proof of Proposition~\ref{p:ring-deg}]
Choose homogeneous generators $\bar{f}_1,\ldots,\bar{f}_p$ for $\gr R$, with 
$\deg(\bar{f}_i) = (m_i,u_i) \in \Gamma \subseteq \N\times \Z^d$.  Lift these 
to generators $f_1,\ldots,f_p$ for $R$, with $f_i\in V^{m_i}$.  Set $S=\kk[x_1,\ldots,x_p]$, and give $S$ a grading by $\deg(x_i) = (m_i,u_i)$; thus the surjective map $S \to \gr R$ defined by $x_i \mapsto \bar{f}_i$ is a map of graded rings.  Let $\bar{g}_1,\ldots,\bar{g}_q \in S$ be homogeneous generators of the kernel, and set $\deg(\bar{g}_j) = (n_j,v_j)$.  It follows that $\bar{g}_j(f_1,\ldots,f_p)$ lies in $R_{<(n_j,v_j)}$ for each $j$.  Since this space is finite-dimensional and $\Gamma_{<(n_j,v_j)}$ is finite, one can find elements $g_j$ in $\bar{g}_j + S_{<(n_j,v_j)}$ such that $g_j(f_1,\ldots,f_p)=0$.  The $g_j$ will not be homogeneous for the full $\N\times\Z^d$ grading of $S$, but since the $f_i$ are homogeneous for the first ($\N$) factor, the $g_j$ can be chosen to respect the $\N$-grading as well.  (This means $g_j$ lies in $(S_{n_j})_{>v_j}$, where $S_{m}$ denotes the piece of $S$ with $\N$-degree $m$.) 

We now claim that the induced map $S/(g_1,\ldots,g_q) \to R$ is an isomorphism.  To see this, let $I$ denote the kernel of the map $S \to R$, and let $J$ be the kernel of $S \to \gr R$.  Then $g_j \in I$ by construction, and the initial terms $\bar{g}_j$ generate $J$; we only need to show that the $g_j$'s generate $I$.  In fact, $J$ is the initial ideal of $I$, using the term order determined by the order on $\Z\times\Z^d$.  (Specifically, the term order is $x_1^{a_1}\cdots x_p^{a_p} \leq x_1^{b_1} \cdots x_p^{b_p}$ iff $\sum a_i(m_i,u_i) \leq \sum b_i(m_i,u_i)$.  Note that we allow ties between monomials in this notion of term order.)  Indeed, the inclusion $J\subseteq \mathrm{in}(I)$ is clear; and given an element $h\in I$, the equality $h(f_1,\ldots,f_p)=0$ implies $\bar{h}(\bar{f}_1,\ldots,\bar{f}_p)=0$, so the initial term $\bar{h}$ lies in $J$.  It follows that the $g_j$'s form a Gr\"obner basis for $I$, using, e.g., \cite[Exercise~15.14(a)]{eisenbud}, and in particular they generate the ideal.  

It is a standard fact that Gr\"obner bases give rise to flat degenerations, but for later use, it will be convenient to recall the construction.  
%We will find a linear map $\pi:\Z\times\Z^d \to \Z$, inducing a coarser term order, such that the initial term $\bar{g}_j$ of $g_j$ is the same for both orders.  It follows that the $g_j$'s form a Gr\"obner basis with respect to the coarser order, as well.
%
To do this, let $(n'_j,v'_j)$ be a degree of some homogeneous component of 
$g_j-\bar{g}_j$, so $(n'_j,v'_j)<(n_j,v_j)$; consider the difference $(n_j,v_j)-(n'_j,v'_j)$.  (The choice of $g_j$ actually means $n'_j=n_j$, but we will preserve the notation for clarity.)  Let $\mathcal{S} \subseteq \Z \times \Z^d$ be the finite set consisting of such differences, together with $0$ and the $d+1$ generators of $\N \times \N^d$.  Using Lemma~\ref{l:caldero}, choose $\pi\colon\Z\times\Z^d \to \Z$ preserving order on $\mathcal{S}$.  Since each nonzero element of $\mathcal{S}$ is greater than $0$, $\pi$ takes positive integer values on $\mathcal{S}\setminus\{0\}$, and therefore also takes positive values on the cone it spans; in particular, $\pi$ takes positive values on $\N\times\N^d$.  Therefore, for each $j$, we have $\pi(n_j,v_j)>\pi(n'_j,v'_j)>0$---that is, the initial term of $g_j$ with respect to the weighting of monomials defined by $\pi$ is exactly $\bar{g}_j$, the initial term with respect to the original order.

Now let $w_i=\pi(m_i,u_i)$ be the degree of $x_i$ under the weighting induced by $\pi$, let $k_j=\pi(n_j,v_j)$, and set $\tilde{g}_j = \tau^{k_j} g_j( \tau^{-w_1}x_1,\ldots,\tau^{-w_p}x_p )$.  Set 
\[
  \RR = S[\tau]/(\tilde{g}_1,\ldots,\tilde{g}_q).
\]
From the construction, this ring satisfies the properties specified in the proposition, where the $\kk[t]$-algebra structure is given by $t\mapsto \tau$.

The ring $\RR$ has two $\N$-gradings, which we will call the ``$m$-grading'' and $t$-grading''.  The first comes from the $\N$-grading on $S$, which $\RR$ inherits since the $\tilde{g}_j$'s are homogeneous for this grading, of degree $n_j$.  The second, which we will call the ``$t$-grading,'' is defined by giving $x_i$ degree $w_i$, and $\tau$ degree $1$.  Then $\tau^{-w_i}x_i$ has $t$-degree $0$, so each $\tilde{g}_j$ is homogeneous for the $t$-grading, of degree $k_j$.  

%(This is a standard way of producing a Rees ring from a Gr\"obner basis; see, e.g., \cite[Theorem~15.17]{eisenbud} or \cite[\S7.A]{bg}.)  Explicitly, the map $S[\tau] \to \RR$ is given by $x_i \mapsto t^{w_i}f_i$ and $\tau \mapsto t$.

To see how $\RR$ arises as a Rees ring, as in \cite[\S3.2]{caldero} and \cite[proof of Prop. 2.2]{ab}, one can define an $\N$-filtration by
\[
  R_{\leq k} = \Span\{ F = f_{i_1}^{a_1} \cdots f_{i_s}^{a_s} \,|\, \pi\circ\hat\nu(F) \leq k \},
\]
and this has the same associated graded $\gr R$ as the one from the $\Gamma$-filtration.  The corresponding Rees ring is isomorphic to $\RR$; in fact, the presentation given above is a standard way of producing a Rees ring from a Gr\"obner basis (see, e.g., \cite[Theorem~15.17]{eisenbud} or \cite[\S7.A]{bg}).  Explicitly, the map $S[\tau] \to \bigoplus (R_{\leq k}) t^k$ is given by $x_i \mapsto t^{w_i}f_i$ and $\tau \mapsto t$.
\end{proof}

\begin{example}\label{ex:cubic-algebra}
Suppose $R=R(V)$ is the ring $\kk[x,y,z]/(g)$, where $g=x^3+x^2z+xz^2+y^3$, and the valuation is $\nu(x)=3$, $\nu(y)=1$, and $\nu(z)=0$.  The ring is generated in degree $1$, so the full $\N\times\Z$ degrees of the variables are $\deg(x)=(1,3)$, $\deg(y)=(1,1)$, $\deg(z)=(1,0)$.  We can take $\pi\colon \Z\times\Z \to \Z$ to be given by $\pi(a,b)=5a-b$, so $w_x=\pi(1,3)=2$, $w_y=\pi(1,1)=4$, and $w_z=\pi(1,0)=5$.  The initial term of $g$ is $\bar{g}=xz^2+y^3$, which has weight $\pi(3,3)=12$.  So
\begin{align*}
  \tilde{g} &= \tau^{12}(\tau^{-6}x^3+\tau^{-9}x^2z + \tau^{-12}xz^2 + \tau^{-12}y^3) \\
            &= \tau^6 x^3 + \tau^3x^2z + xz^2 + y^3.
\end{align*}
The ring $\RR$ is $\kk[x,y,z,\tau]/(\tilde{g})$, giving the flat degeneration of $R$ to $\RR/t\RR = \kk[x,y,z]/(xz^2+y^3)$.
\end{example}

Geometrically, Proposition~\ref{p:ring-deg} says there is a flat family of 
affine varieties $\hat\XX \to \A^1$, such that the general fiber $\hat{X}_t$ is 
isomorphic to $\hat{X} = \Spec R(V)$ for $t\neq 0$, and the zero fiber $\hat{X}_0$ 
has an action of the torus $\kk^*\times (\kk^*)^d$.

The first $\N$-grading, coming from the grading of $\kk[t]$, says that the 
family is equivariant for actions of $\kk^*$ on $\hat\XX$ and $\A^1$.

Taking $\Proj$ with respect to the second $\N$-grading, coming from 
the grading of $R$, we obtain a projective flat family $\XX=\Proj\RR \to 
\A^1$, with general fiber isomorphic to $X=X(V)=\Proj R(V)$ and special fiber 
$X_0 = \Proj (\gr R)$ equipped with an action of $(\kk^*)^d$.  The $\kk^*$ action from 
the first $\N$-grading descends, and $\XX \to \A^1$ is equivariant for this $\kk^*$ action.

As in \cite[Theorem~3.2]{ab}, the polarization of $X$ also deforms.  The $R$-module $R(1)$ obtained by shifting the $\N$-grading corresponds to the very ample line bundle $\O_X(1)$ embedding $X$ in projective space, since by definition $R$ is generated by its first graded piece.  Similarly, shifting the grading on $\gr R$ produces a sheaf $\O_{X_0}(1)$, but this may not be locally free, since $\Gamma$ (and $\gr R$) may not be generated in degree one.

In fact, letting $(m_1,u_1),\ldots,(m_p,u_p)$ be the valuations of the generators $f_1,\ldots,f_p$ chosen in the proof of Proposition~\ref{p:ring-deg}, we get an embedding
\[
  \XX \hookrightarrow \P(m_1,\ldots,m_p) \times \A^1
\]
into weighted projective space over $\A^1$.  Pulling back the Serre sheaf for weighted projective space, we obtain a sheaf $\O_\XX(1)$ on $\XX$ which restricts to $\O_X(1)$ on the fibers $X_t=X$ for $t\neq 0$, and to $\O_{X_0}(1)$ at $t=0$.  There are analogous sheaves $\O_\XX(n)$ for all $n\geq 0$, corresponding to higher shifts of the grading.  (Note, however, that $\O_\XX(n)$ is not equal to $\O_\XX(1)^{\otimes n}$ in general.)  For $n=\lcm(m_1,\ldots,m_p)$, the sheaf $\O_\XX(n)$ is a very ample line bundle, coming from the restriction of the generator of $\Pic(\P(m_1,\ldots,m_p))$.

We can summarize the above geometric interpretation of Proposition~\ref{p:ring-deg} as follows.  Let $X$ be a projective variety with a very ample and normally generated line bundle $\cL$, with $V=H^0(X,\cL)$, so the section ring of $\cL$ is equal to $R(V)$.

\begin{corollary}\label{c:geom-deg}
Given such a pair $(X,\cL)$, assume the hypothesis of Proposition~\ref{p:ring-deg}.  There is a flat, projective morphism $\XX \to \A^1$, equivariant for an action of $\kk^*$, such that the zero fiber $X_0$ is equipped with an action of $(\kk^*)^d$, and its complement $\XX\setminus X_0$ is isomorphic to $X \times (\A^1\setminus\{0\})$.  Moreover, there are sheaves $\O_\XX(n)$ on $\XX$ restricting to the line bundles $\cL^{\otimes n}$ on fibers $X_t=X$ for $t\neq 0$, and restricting to sheaves $\cL_0^{(n)}$ on $X_0$.  For sufficiently divisible $n$, these sheaves are very ample line bundles.  

If, additionally, $X$ is normal and $X_0$ is reduced, then $\XX$ is normal, and the sheaves $\O_\XX(n)$ are reflexive (and divisorial).
\end{corollary}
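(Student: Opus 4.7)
The setup is as in Proposition~\ref{p:ring-deg} together with the geometric discussion that follows it. Set $\XX = \Proj\RR$ with respect to the first ($m$-)grading, so that the inclusion $\kk[\tau]\hookrightarrow\RR$ induces the projective morphism to $\A^1$; flatness and projectivity follow from the corresponding properties of $\RR$ over $\kk[\tau]$. The $t$-grading on $\RR$ descends to a $\kk^*$-action on $\XX$ lifting the weight-one action on $\A^1$, while the $\Z^d$-grading on $\gr R$ gives $X_0 = \Proj(\gr R)$ its $(\kk^*)^d$-action. The identification $\RR[\tau^{-1}]\isom R[\tau,\tau^{-1}]$ from Proposition~\ref{p:ring-deg} gives, upon taking $\Proj$ over $\kk[\tau,\tau^{-1}]$, the isomorphism $\XX\setminus X_0 \isom X\times(\A^1\setminus\{0\})$.

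For the sheaves, use the closed embedding $\XX\hookrightarrow\P(m_1,\ldots,m_p)\times\A^1$ provided by the generators $f_1,\ldots,f_p$ from the proof of Proposition~\ref{p:ring-deg}, and define $\O_\XX(n)$ as the pullback of $\O_{\P(m_1,\ldots,m_p)}(n)$. Because $\cL$ is normally generated, $R$ is the section ring of $\cL$, so on a fiber $X_t\isom X$ with $t\neq 0$ the restriction of $\O_\XX(n)$ is $\cL^{\otimes n}$; on $X_0$ the restriction is by definition $\cL_0^{(n)}$. When $n$ is divisible by $N = \lcm(m_1,\ldots,m_p)$, the Serre sheaf on weighted projective space is a very ample line bundle (a power of $\O(N)$, which generates the Picard group), so its pullback $\O_\XX(n)$ is a very ample line bundle on $\XX$ relative to $\A^1$.

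Now suppose $X$ is normal and $X_0$ is reduced. The complement $\XX\setminus X_0\isom X\times(\A^1\setminus\{0\})$ is already normal, so it suffices to verify Serre's conditions $R_1$ and $S_2$ on $\XX$ at points of $X_0$. Since $\A^1$ is regular of dimension one and $\XX\to\A^1$ is flat, the depth formula $\mathrm{depth}_x\XX = \mathrm{depth}_x X_0 + 1$ and the transfer statement ``$\XX$ is regular at $x$ iff $X_0$ is regular at $x$'' (EGA IV$_2$, 6.5.2 and 17.3) imply that $S_1$ (respectively $R_0$) on $X_0$ at $x$ yields $S_2$ (respectively $R_1$) on $\XX$ at $x$. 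Both $R_0$ and $S_1$ are provided by reducedness of $X_0$, so $\XX$ is normal. For the reflexivity of $\O_\XX(n)$ for arbitrary $n$: the non-line-bundle locus of $\O_{\P(m_1,\ldots,m_p)}(n)$ lies in the singular locus of the weighted projective space, which has codimension $\geq 2$ there; its intersection with $\XX$ has codimension $\geq 2$ in $\XX$ (the open locus $X\times(\A^1\setminus\{0\})$ is contained in the smooth locus of $\P(m_1,\ldots,m_p)\times\A^1$). A rank-one coherent sheaf on a normal variety that is a line bundle away from a codimension-$\geq 2$ set is automatically reflexive and divisorial.

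The main obstacle is making the normality argument precise. The flatness-transfer statements for Serre's conditions $R_k$ and $S_k$ along a flat morphism to a regular base of dimension one are standard, but one must state them at the level of local rings and confirm that the hypothesis on $X_0$, namely reducedness (i.e., $R_0 + S_1$), is exactly what is needed to promote to $R_1 + S_2$ on $\XX$. Reflexivity then follows formally, provided one verifies that the singular locus of $\P(m_1,\ldots,m_p)$ pulled back to $\XX$ has codimension at least two; this is automatic once $\XX$ is known to be normal and of positive dimension along $X_0$.
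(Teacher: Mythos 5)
Your argument for the first two paragraphs is the same as the paper's: translate Proposition~\ref{p:ring-deg} into geometry via $\Proj$, and obtain $\O_\XX(n)$ from the embedding into weighted projective space over $\A^1$. The normality argument is also essentially the paper's, phrased via EGA transfer results rather than the paper's explicit regular sequence $(t,a_1,\ldots,a_{\dim X_0})$. One caution: your ``iff'' (``$\XX$ is regular at $x$ iff $X_0$ is regular at $x$'') is overstated. For a flat local extension of a DVR, regularity of the total ring does not imply regularity of the closed fiber (e.g.\ $k[[t]]\hookrightarrow k[[x,y]]$, $t\mapsto x^2-y^2$, is flat with regular source but nonregular fiber). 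You only use the valid direction (base regular, fiber regular, flat $\Rightarrow$ total regular), so this is a misstatement rather than an error in the argument, but the citation should be narrowed.

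The genuine gap is in the reflexivity argument. The assertion ``a rank-one coherent sheaf on a normal variety that is a line bundle away from a codimension-$\geq 2$ set is automatically reflexive and divisorial'' is false. For instance, the ideal sheaf $\mathcal{I}_Z$ of a codimension-two point $Z$ on a normal variety $X$ is coherent of rank one and locally free off $Z$, yet $\mathcal{I}_Z^{\vee\vee}\isom\O_X\neq\mathcal{I}_Z$, so it is not reflexive. What is true is that $j_*j^*\cF$ is reflexive, where $j$ includes the open locus on which $\cF$ is invertible and the complement has codimension $\geq 2$; to conclude $\cF$ itself is reflexive one must in addition show $\cF\to j_*j^*\cF$ is an isomorphism, equivalently that $\cF$ is $S_2$. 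The paper does not reprove this but cites \cite[Theorem~3.2]{ab}, where the depth condition is checked via the same flatness-over-$\A^1$ mechanism (relating $\mathrm{depth}$ of $\O_\XX(n)$ at a point of $X_0$ to $\mathrm{depth}$ of its restriction to $X_0$). You would need to supply that step rather than appeal to the codimension criterion, which is insufficient on its own.

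A minor unstated point: your codimension bound uses that $X\times(\A^1\setminus\{0\})$ lies in the smooth locus of $\P(m_1,\ldots,m_p)\times\A^1$. This does hold, because the degree-one generators among $f_1,\ldots,f_p$ span $V$ (as $R$ is generated in degree one) and these define a base-point-free system on $X$, so some weight-one coordinate is nonzero at each point of $X$; the corresponding affine chart of weighted projective space is smooth and $\O(n)$ is invertible there. This deserves a sentence in the proof.
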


\begin{proof}
The statements in the first paragraph are direct translations of the content of Proposition~\ref{p:ring-deg}, as explained above.  Note that, in general, the singular locus of $\XX$ is contained in the union of the singular loci of $X_t$, for all $t$.  For $t\neq 0$, this is obvious, since $\XX$ is trivial away from $0$.  On the other hand, let $A$ be a local ring of $\XX$ at a point in the zero fiber which is nonsingular in $X_0$.  Since $\XX = \Proj \RR$ is flat over $\A^1 = \Spec \kk[t]$, $t$ is a nonzerodivisor in $A$.  Since the point is nonsingular in $X_0$, the maximal ideal of the local ring $A/(t)$ is generated by a regular sequence $(a_1,\ldots,a_{\dim X_0})$.  Then $(t,a_1,\ldots,a_{\dim X_0})$ is a regular sequence generating the maximal ideal of $A$.

It follows that $\XX$ is always $\mathrm{R}_1$.  When $X$ is normal, the non-normal locus of $\XX$ must be contained in the zero fiber.  But if $X_0$ is reduced, the local rings $A/(t)$ are $\mathrm{S}_1$, so the local rings $A$ are $\mathrm{S}_2$, and $\XX$ is normal by Serre's criterion (cf.~\cite[Exercise~2.2.33]{bh}).  The claim about reflexivity of $\O_\XX(n)$ is proved as in \cite[Theorem~3.2]{ab}.
\end{proof}

\begin{remark}
The limit $X_0$ depends only on the valuation $\nu$, but the family itself 
depends on the choice of projection $\pi:\Z\times\Z^d\to\Z$.
\end{remark}

\begin{remark}
A version of Proposition~\ref{p:ring-deg} is stated in \cite[\S5.6]{kk-alg}, where the language of SAGBI bases is used.  Degenerations to graded rings using more general valuations were constructed by Teissier \cite[\S2]{teissier}, with a view toward resolution of singularities; the algebras he considers are not necessarily finitely generated.
\end{remark}

\begin{remark}\label{rmk:multigraded}
Given $r$ linear systems $V_1,\ldots,V_r$, with $V_i\subseteq H^0(X,\cL_i)$, one can form the multigraded section ring $R(V_1,\ldots,V_r) = \bigoplus_{\bm \in \N^r} \bV^\bm$, where $\bV^\bm = V_1^{m_1}\cdot V_2^{m_2} \cdots V_r^{m_r} \subseteq H^0(X,\cL_1^{\otimes m_1} \otimes \cdots \otimes \cL^{\otimes m_r})$. The $\N^r$-graded structure of $R$ gives rise to a multi-parameter ``degeneration in stages'' generalizing the Gelfand-Tsetlin degeneration of \cite{km}.  The extended valuation $\hat\nu$ is defined as in \S\ref{s:vals}, but now takes values in $\N^r\times\Z^d$.  One begins by considering $R$ itself as an $(\N^r \times \Z^d)$-graded (in fact, $\Gamma_\nu$-graded) ring, but with the $(\bm,u)$-component equal to zero for $u\neq 0$.  Then one constructs a sequence of filtrations and associated graded rings
\[
  R = \gr^{(0)}R,\; \gr^{(1)}R,\; \ldots, \; \gr^{(r)}R = \gr R.
\]
At the $j$th stage, the subring of $\gr^{(j)}R$ indexed by $\N^j \times\{0\}$ has a nontrivial $\Gamma$-grading, while the subring indexed by $\{0\}\times \N^{r-j}$ remains isomorphic to $R(V_{j+1},\ldots,V_r)$.  The details of the construction are straightforward but notationally quite unwieldy, so we omit them.
\end{remark}

When the valuation comes from a complete flag, we obtain a toric 
degeneration.  Let $X$ be a projective variety, and let $V\subseteq H^0(X,\cL)$ be a linear system.  Let $X(V) = \Proj R(V)$ be the (closure of the) image of $X$ under the corresponding rational map to $\P(V)$.

\begin{theorem}\label{t:deg}
Fix a complete flag $Y_\bullet$ on $X$, and let $\nu=\nu_{Y_\bullet}$ be the associated valuation.  Assume $\Gamma=\Gamma_\nu(V)$ is finitely generated.  Then $X(V)$ admits a flat degeneration to the (not necessarily normal) toric variety $X(\Gamma) = \Proj \kk[\Gamma]$.

Suppose $\Cone(\Gamma)$ is generated by $\Gamma\cap(\{1\}\times\N^d)$.  If $\nu(V^d) = d\Delta_{Y_\bullet}(V) \cap \Z^d$, then $X(\Gamma)$ is normal; when $V=H^0(X,\cL)$ with $\cL$ very ample and normally generated, the converse holds.
\end{theorem}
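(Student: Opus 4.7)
The plan is to combine Proposition~\ref{p:ring-deg} with the one-dimensionality statement of Lemma~\ref{l:dim1}. Applying Proposition~\ref{p:ring-deg} to $R = R(V)$ yields a flat family $\RR \to \A^1$ with general fiber $R(V)$ and special fiber $\gr R$; taking $\Proj$ with respect to the $\N$-grading inherited from $R = \bigoplus V^m$ produces the flat projective degeneration $\XX \to \A^1$ of $X(V)$ to $\Proj(\gr R)$. It thus suffices to identify $\gr R$ with the semigroup algebra $\kk[\Gamma]$; the induced $\Z^d$-grading then endows $\Proj\kk[\Gamma]$ with a $(\kk^*)^d$-action, exhibiting it as a (possibly non-normal) toric variety.

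For the identification $\gr R \isom \kk[\Gamma]$, the crucial input is Lemma~\ref{l:dim1}, whose hypothesis holds precisely because $Y_\bullet$ is a complete flag. For each $(m,u) \in \N\times\Z^d$, the graded piece $R_{\leq (m,u)}/R_{<(m,u)}$ is naturally identified (via projection onto the top $V^m$-component) with $V^m_{\geq u}/V^m_{>u}$, which has dimension at most one by Lemma~\ref{l:dim1}, with equality exactly when $(m,u) \in \Gamma$. Choosing a nonzero lift $\bar{f}_{(m,u)}$ for each $(m,u) \in \Gamma$ defines a $\kk$-linear bijection $\gr R \to \kk[\Gamma]$ sending $\bar{f}_{(m,u)}$ to the monomial $x^{(m,u)}$. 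Multiplicativity (up to nonzero scalars) follows from the valuation identity $\nu(fg) = \nu(f) + \nu(g)$: the product of lifts representing $(m,u)$ and $(m',u')$ lies in the one-dimensional piece of $\gr R$ at $(m+m',u+u')$. Normalizing the scalars consistently on a finite generating set of $\Gamma$ and propagating multiplicatively turns this bijection into a graded ring isomorphism.

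For the normality statement, apply Lemma~\ref{l:normal} to $\Gamma$: the assumption that $\Cone(\Gamma)$ is generated by $\Gamma \cap (\{1\}\times\N^d)$ supplies the first hypothesis (via Lemma~\ref{l:fg}), while $\nu(V^d) = d\Delta_{Y_\bullet}(V)\cap\Z^d$ rephrases as $\Gamma\cap(\{d\}\times\N^d) = \{d\}\times(d\Delta\cap\N^d)$, which is the second. For the converse, $V = H^0(X,\cL)$ with $\cL$ very ample and normally generated makes $R(V)$ the integrally closed section ring with $V^m = H^0(X,\cL^{\otimes m})$ for all $m$; in particular $\Gamma$ is generated in degree $1$, so $\cL_0^{(1)}$ is an ample line bundle on $X_0 = X(\Gamma)$. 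Assuming $X_0$ is normal, it is isomorphic to the toric variety $X_\Delta$, and Demazure vanishing together with flatness yields $\chi(X,\cL^{\otimes d}) = \chi(X_0,\cL_0^{(d)}) = |d\Delta\cap\Z^d|$. The chain $|\nu(V^d)| = \dim V^d = h^0(X,\cL^{\otimes d}) \geq \chi(X,\cL^{\otimes d}) = |d\Delta\cap\Z^d| \geq |\nu(V^d)|$, where the last inequality is the automatic inclusion $\nu(V^d)\subseteq d\Delta\cap\Z^d$, forces equality throughout and yields $\nu(V^d) = d\Delta\cap\Z^d$.

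The main obstacle, I expect, is making the multiplicative matching in the identification $\gr R \isom \kk[\Gamma]$ fully precise: each lift $\bar{f}_{(m,u)}$ is only canonical up to $\kk^*$, and one must verify that these ambiguities can be simultaneously trivialized. Finite generation of $\Gamma$ allows this to be done concretely on a finite generating set and propagated via the multiplication; more intrinsically, one can pin the identification down using the $(\kk^*)^d$-character on each graded piece of $\gr R$, matching it with the monomial character on $\kk[\Gamma]$.
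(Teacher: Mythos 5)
Your identification of $\gr R$ with $\kk[\Gamma]$ and the forward normality implication follow the same route as the paper: reduce via Proposition~\ref{p:ring-deg}, invoke Lemma~\ref{l:dim1} for one-dimensionality of the graded pieces, and apply Lemma~\ref{l:normal}. The cocycle issue you flag is real; the paper handles it by also noting that homogeneous elements of $\gr R$ are nonzerodivisors (from additivity of $\nu$) and then citing \cite[Remark~4.13]{bg}, which covers exactly the claim that a $\Gamma$-graded domain over $\kk$ with one-dimensional pieces, $\Gamma$ an affine semigroup, is a semigroup algebra.

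Your converse argument, however, departs substantially from the paper's and has a gap. The paper simply observes that the only obstruction to the converse is $\Gamma$ generating a proper sublattice of $\Z\times\Z^d$, which \cite[Lemma~2.2]{lm} rules out for complete linear series. You instead run a flatness-of-Euler-characteristic count: a nice idea, but the step ``in particular $\Gamma$ is generated in degree $1$'' does not follow from normal generation of $\cL$. Normal generation says $R(V)$ is generated in degree one; this property is \emph{not} inherited by the associated graded ring $\gr R \cong \kk[\Gamma]$, because taking initial forms can raise the degrees in which a ring is generated (cancellation of leading terms of degree-one products can make higher-degree generators necessary). The remainder of your chain relies on $\cL_0^{(1)}$ being a very ample line bundle on $X_0$; without degree-one generation of $\Gamma$ one would have to work with $\O_\XX(n)$ for $n$ sufficiently divisible (as in Corollary~\ref{c:geom-deg}), adjust the Euler-characteristic comparison to that Veronese, and then argue back down to degree $d$. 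Separately, the identification of $\cL_0^{(d)}$ (or its Veronese) with the torus-invariant divisor $dD_\Delta$ on $X_\Delta$, needed to invoke Demazure vanishing and the lattice-point count, is asserted but would require an argument tying the polarization of $\Proj\kk[\Gamma]$ to the lattice $\Z^d$. This is precisely where the paper's appeal to $\Gamma$ generating the full lattice does the work that your proof is missing.
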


\begin{proof}
It suffices to show that the associated graded $\gr R$ is isomorphic to 
the semigroup algebra $\kk[\Gamma]$.  By Lemma~\ref{l:dim1}, $R_{\leq (m,u)}/R_{<(m,u)} = (V^m)_{\geq u}/(V^m)_{>u}$ is one-dimensional when $(m,u)\in \Gamma$, and it is zero otherwise.  Moreover, homogeneous elements of $\gr R$ are nonzerodivisors, since $R_{\leq (m,u)}\cdot R_{\leq (m',u')} \not\subseteq R_{<(m+m',u+u')}$, as one sees from the additivity of the valuation $\nu$.  It follows that $\gr R \isom \kk[\Gamma]$ (see \cite[Remark~4.13]{bg}).

The second statement is an immediate consequence of Lemma~\ref{l:normal}.  The only possible exception to the ``converse'' statement happens when $\Gamma$ generates a proper sublattice of $\Z\times \Z^d$; however, by \cite[Lemma~2.2]{lm}, this cannot occur for complete linear series.
\end{proof}

Note that the condition $\cD_d(V,V^m)$ for all $m\geq 0$ implies all the hypotheses of the theorem.

\begin{example}\label{ex:curve-good}
Let $X$ be an elliptic curve, with flag $Y_\bullet$ given by $X\ni P$ for some point $P$.  Take $V=H^0(X,\O_X(3P))$, giving the cubic embedding in $\P^2$.  Then $\cD_d(V,V^m)$ holds for all $m\geq 0$ (with respect to $Y_\bullet$), and one sees that the semigroup $\Gamma\subset \N\times\Z$ is generated by $(1,0)$, $(1,1)$, and $(1,3)$.  (See Figure~\ref{f:elliptic}(a).)  The curve 
$X$ degenerates to a cuspidal cubic curve (with its toric structure).  For the curve $X = \{ x^3+x^2z+xz^2+y^3=0 \} \subseteq \P^2$, with inflection point $P=[0,0,1]$, the algebra of the degeneration is given in Example~\ref{ex:cubic-algebra}.
\end{example}

\begin{example}[{\cite[Ex. 1.7]{lm}}]\label{ex:curve-bad}
Take $X$ to be an elliptic curve and $V=H^0(X,\O(3P))$ as before, but using a general point $P'\in X$ to define the flag.  The semigroup is $\Gamma_{P'}(V) = \{(0,0)\} \cup \{ (m,r) \,|\, 0\leq r\leq 3m-1\} \subset \N\times\Z$, which is not finitely generated: every lattice point on the line $r=3m-1$ is needed to generate $\Gamma$.  (Figure~\ref{f:elliptic}(b).)
\end{example}

\begin{remark}
Examples~\ref{ex:curve-good} and \ref{ex:curve-bad} work just as well for any smooth projective curve, using a very ample linear system $V$ (\cite[Ex. 1.7]{lm}).
\end{remark}

\begin{figure}
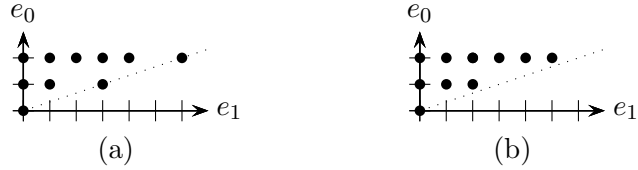

\pspicture(-70,-20)(300,50)
\psaxes[Dx=10,Dy=10,labels=none]{->}(70,30)
\rput(0,37){$e_0$}
\rput(78,0){$e_1$}
\pscircle*(0,0){2}
\pscircle*(0,10){2}
\pscircle*(10,10){2}
\pscircle*(30,10){2}
\pscircle*(0,20){2}
\pscircle*(10,20){2}
\pscircle*(20,20){2}
\pscircle*(30,20){2}
\pscircle*(40,20){2}
\pscircle*(60,20){2}

\psline[linestyle=dotted]{-}(0,0)(69,23)

\rput(35,-15){(a)}

\psaxes[Dx=10,Dy=10,labels=none]{->}(150,0)(150,0)(220,30)
\rput(150,37){$e_0$}
\rput(228,0){$e_1$}
\pscircle*(150,0){2}
\pscircle*(150,10){2}
\pscircle*(160,10){2}
\pscircle*(170,10){2}
\pscircle*(170,20){2}
\pscircle*(150,20){2}
\pscircle*(160,20){2}
\pscircle*(170,20){2}
\pscircle*(180,20){2}
\pscircle*(190,20){2}
\pscircle*(200,20){2}

\psline[linestyle=dotted]{-}(150,0)(219,23)

\rput(185,-15){(b)}

\endpspicture

\caption{\small Semigroups $\Gamma_\nu$ from Examples~\ref{ex:curve-good} and \ref{ex:curve-bad}.  (The standard basis for $\Z\times \Z$ is $\{e_0,e_1\}$; the axes have been interchanged for typographical convenience.)}
\label{f:elliptic}
\end{figure}

The construction of Proposition~\ref{p:ring-deg} is functorial, in a sense to be made precise by Proposition~\ref{p:compatible}.  For this, we need to define two notions of compatibility.  
First, let $M$ be a finitely generated free abelian group.  Suppose the vector space $V$ is $M$-graded, i.e., we are given a decomposition $V=\bigoplus_{\lambda\in M} V_\lambda$, and that the kernel of the natural map $\Sym^\bullet V \to R(V)$ is homogeneous, so the grading on $V$ defines one on $R$.  (This $M$-grading is independent of the standard $\N$-grading on $R$.)

\begin{definition}\label{d:m-compat}
The $M$-grading is \define{compatible} with a $\Z^d$-valuation 
$\nu$ if for all $m\in\N$ and $u\in\Z^d$, the subspaces $V^m_{\geq u}$ and $V^m_{>u}$ are $M$-graded subspaces (i.e., they have bases of $M$-homogeneous elements).
\end{definition}

\begin{remark}
In fact, it suffices to require that $V^m_{\geq u}$ and $V^m_{>u}$ are $M$-graded as $(m,u)$ ranges over a generating set for $\Gamma_\nu\subseteq\N\times\Z^d$.
\end{remark}

Second, let $R$ and $R'$ be $\N$-graded rings, and consider a graded ring homomorphism $\phi\colon R\to R'$, with kernel $J\subseteq R$.  Let $h\colon\Z^d \to \Z^{d'}$ be a map of ordered groups, with respect to the \emph{opposite} lexicographic order; this makes $\id\times h\colon \Z\times \Z^d \to\Z\times \Z^{d'}$ into a map of ordered groups with respect to the order defined in \S\ref{s:vals}.  Let $\nu$ be a $\Z^d$-valuation on $R$, and $\nu'$ be a $\Z^{d'}$-valuation on $R'$.

\begin{definition}\label{d:compat2}
The valuations $\nu$ and $\nu'$ are \define{compatible} with $\phi$ and $h$ if the diagram
\begin{diagram}
R\setminus J     &  \rTo^\phi   &  R'\setminus\{0\} \\
\dTo^\nu &        & \dTo_{\nu'} \\
\Z^d  & \rTo^h      & \Z^{d'}
\end{diagram}
commutes.
\end{definition}

\begin{remark}
The requirement that $h$ preserve the opposite lexicographic order is somewhat restrictive.  For example, if $d\geq d'$ and $h$ is a coordinate projection, then it must be the projection 
on the last $d'$ factors.
\end{remark}

\begin{proposition}\label{p:compatible}
Let $R=R(V)$ and $\nu$ be as in Proposition~\ref{p:ring-deg}.
\renewcommand{\theenumi}{\alph{enumi}}
\begin{enumerate}
\item If $R$ has an $M$-grading which is compatible with $\nu$, then the grading lifts to $\RR$, so $\RR$ is $(\N\times M\times\N)$-graded. \label{p:compatible-grading}

\smallskip

\item Let $V'$, $R'=R(V')$, and $\nu'$ be as in Proposition~\ref{p:ring-deg}, 
as well.  Let $h\colon\Z^d \to \Z^{d'}$ be a map of ordered groups, and let $\phi\colon R \to R'$ be a graded ring map arising from a linear map $V \to V'$.  Suppose these are compatible with $\nu$ and $\nu'$.  Then one can choose 
$\pi'\colon\Z\times\Z^{d'}\to \Z$, and set $\pi = \pi'\circ (\id\times h)$, such that the corresponding Rees rings $\RR$ and $\RR'$ both satisfy the properties specified in Proposition~\ref{p:ring-deg}.  Moreover, there is an induced map of $\kk[t]$-algebras $\Phi\colon\RR \to \RR'$, which preserves the $(\N\times\N)$-gradings.  \label{p:compatible-map}
\end{enumerate}
\end{proposition}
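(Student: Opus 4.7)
The plan is to revisit the construction of $\RR$ from Proposition~\ref{p:ring-deg} and carry it out $M$-equivariantly for part (a), and functorially in $\phi$ and $h$ for part (b).

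For part (a), the compatibility hypothesis says that each filtration piece $R_{\leq(m,u)}$ is an $M$-graded subspace, so $\gr R$ inherits an $M$-grading. I would lift an $M$-homogeneous set of generators $\bar{f}_i$ of $\gr R$ to $M$-homogeneous elements $f_i\in R$, giving $S=\kk[x_1,\ldots,x_p]$ an $M$-grading via $\deg_M(x_i)=\deg_M(f_i)$. The kernels of the $M$-graded maps $S\to\gr R$ and $S\to R$ are then $M$-graded ideals, in which I would pick $M$-homogeneous generators $\bar{g}_j$ and $M$-homogeneous lifts $g_j$ (the lower-order corrections lying in the $M$-graded subspaces $R_{\leq(n_j,v_j)}$). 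Since $\tau$ is $M$-trivial, the homogenized relations $\tilde{g}_j$ remain $M$-homogeneous, so $\RR=S[\tau]/(\tilde{g}_1,\ldots,\tilde{g}_q)$ inherits the $M$-grading, yielding the claimed $(\N\times M\times\N)$-grading.

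For part (b), I would use the direct Rees-ring interpretation $\RR=\bigoplus_k R_{\leq k}\,t^k\subseteq R[t]$, and analogously for $\RR'$. With $\pi=\pi'\circ(\id\times h)$, the compatibility of $\nu,\nu'$ with $\phi,h$ gives, for any $f\in R$ with $\phi(f)\neq 0$,
\[
\pi'(\hat{\nu'}(\phi(f))) = \pi'((\id\times h)(\hat\nu(f))) = \pi(\hat\nu(f)),
\]
so the induced $\kk[t]$-algebra map $R[t]\to R'[t]$ sends $R_{\leq k}\,t^k$ into $R'_{\leq k}\,t^k$. Its restriction is the desired $\Phi\colon\RR\to\RR'$, which manifestly respects both $\N$-gradings.

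The main technical obstacle is coordinating Lemma~\ref{l:caldero} on both sides: I need $\pi'$ to preserve order on the finite set $\mathcal{S}'\subset\Z\times\Z^{d'}$ from the presentation of $R'$, while simultaneously $\pi=\pi'\circ(\id\times h)$ preserves order on the finite set $\mathcal{S}\subset\Z\times\Z^d$ from the presentation of $R$. My plan is to apply Lemma~\ref{l:caldero} to the enlarged set $\mathcal{S}'\cup(\id\times h)(\mathcal{S})\subset\Z\times\Z^{d'}$. Since $h$ preserves the opposite lexicographic order, $\id\times h$ respects the modified order of \S\ref{s:vals} only weakly, so positive elements of $\mathcal{S}$ might collapse to $0$ in the image; this is the delicate point. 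To sidestep it, I would select the generators $f_i$ of $R$ to include lifts of a generating set for $R'$ and choose the $g_j$'s adapted to $\phi$: whenever a lower-order term of $g_j$ at degree $(n_j,v'_j)$ satisfies $h(v_j)=h(v'_j)$, its image under $\phi$ is a relation of $R'$ at degree $(n_j,h(v_j))$ already generated by the $\bar{g}'_\ell$'s, so the offending component can be removed from $g_j$ (by a suitable modification inside $\ker(S\to R)$) without altering the ideal. Once the enlarged finite set is strictly ordered, Lemma~\ref{l:caldero} produces $\pi'$, and the two Rees rings together with the map $\Phi$ are constructed as above.
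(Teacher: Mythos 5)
For part (a) your argument is correct but takes a longer route than the paper's. You rebuild the $M$-grading from a presentation of $\gr R$, choosing $M$-homogeneous generators $\bar f_i$, $M$-homogeneous lifts $f_i$ and relations $g_j$. The paper instead notes directly that compatibility (Definition~\ref{d:m-compat}) means each $R_{\leq(m,u)}$ is an $M$-graded subspace of $R$, hence so is each $R_{\leq k}$ once $\pi$ is chosen, and therefore the Rees ring $\RR = \bigoplus_k R_{\leq k}\,t^k$ inherits the $M$-grading with no further work. Both are fine; yours just re-proves part of what the hypothesis already gives you.

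For part (b), your main construction---apply Lemma~\ref{l:caldero} to $\mathcal{S}'\cup(\id\times h)(\mathcal{S})$, set $\pi=\pi'\circ(\id\times h)$, and use compatibility to get $\phi(R_{\leq k})\subseteq R'_{\leq k}$---is exactly the paper's argument, and the ``delicate point'' you flag (that $\id\times h$ is only weakly order-preserving, so a positive element $(n_j,v_j)-(n_j,v'_j)$ of $\mathcal{S}$ could map to $0$, in which case $\pi$ fails to separate $\bar g_j$ from a lower-order term and $\RR/t\RR$ need not be $\gr R$) is a genuine subtlety which the paper's one-sentence proof does not discuss. However, your proposed fix does not hold up. You claim that whenever $h(v_j)=h(v'_j)$ the offending component of $g_j$ at degree $(n_j,v'_j)$ ``can be removed from $g_j$ (by a suitable modification inside $\ker(S\to R)$) without altering the ideal.'' But if $L$ is that homogeneous component, $g_j-L$ lies in $\ker(S\to R)$ only when $L(f_1,\ldots,f_p)=0$, and nothing you have said forces this; the observation that $\phi$ sends the corresponding relation to something expressible in the $\bar g'_\ell$'s is a statement about $R'$ and does not lift to a cancellation in $\ker(S\to R)$. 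Nor is it clear that after such a modification the $\bar g_j$'s would still generate the initial ideal $J$. So your patch, as written, does not close the gap you identified; you would either need to argue that the collapsed differences cannot arise for the $g_j$'s that occur, or exhibit an explicit mechanism (e.g., via S-pair reduction relative to $\phi$) producing a Gr\"obner basis of $I$ whose degree differences inject under $\id\times h$.
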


\begin{proof}
For \eqref{p:compatible-grading}, simply observe that by definition, $R_{\leq (m,u)}$ is $M$-graded for each $(m,u)\in\Z\times\Z^d$; therefore so is $R_{\leq k}$, once a projection $\pi$ is chosen.

For \eqref{p:compatible-map}, apply Lemma~\ref{l:caldero} to the finite set $\mathcal{S}$ consisting of the elements used in the construction of $\RR'$, together with the images of those used in the construction of $\RR$ under the map $h$.  Since $\pi=\pi'\circ (\id\times h)$, and $\nu$ and $\nu'$ are compatible, we have $\phi(R_{\leq k}) \subseteq R'_{\leq k}$ for each $k$.  The statement follows.
\end{proof}

These notions of compatibility arise naturally when $\nu = \nu_{Y_\bullet}$.

\begin{example}\label{ex:torus}
Let $T$ be a torus with character group $M$ acting on $X$, let $\cL$ be a $T$-linearized line bundle, with the induced $M$-grading on $H^0(X,\cL)$, and let $V\subseteq H^0(X,\cL)$ be an $M$-graded subspace.  Let $Y_\bullet$ be a flag 
of $T$-invariant subvarieties.  Then the $M$-grading is compatible with $\nu_{Y_{\bullet}}$.  

To see this, first note that the maps $H^0(X,\cL)^{\otimes m} \to H^0(X,\cL^{\otimes m})$ are $T$-equivariant for all $m$, and it follows that the map $\Sym^\bullet V \to R(V)$ defines an $M$-grading on $R=R(V)$.  Next, observe that $\nu(t\cdot s) = \nu(s)$ for any section $s\in H^0(X,\cL)$ and any $t\in T$.  (This follows from the general fact that $\ord_{D}(g\cdot s) = \ord_{g^{-1}D}(s)$ for any divisor $D$ and any group action.)  Consequently,  for any $T$-invariant subspace $V\subseteq H^0(X,\cL)$ and any $u\in \Z^d$, the subspace $V_{\geq u}$ is also $T$-invariant.  Replacing $V$ with $V^m$, we see that the $M$-grading is compatible.

Now let $(y_1,\ldots,y_d)$ be a regular sequence at the point $Y_d$, with 
$(y_1,\ldots,y_r)$ defining $Y_r$ for all $1\leq r\leq d$.  Since the $y_i$'s 
define a basis for the cotangent space $T^*_{Y_d}X$, the torus $T$ acts on $T_{Y_d}X$ with weights $-\lambda_1,\ldots,-\lambda_d$.  In fact, the map $\Z^d \to M$, $e_i\mapsto\lambda_i$, gives a map $T \to (\kk^*)^d$ and defines an action of $T$ on the limit toric variety $X(\Delta_{Y_\bullet}(V))$.
\end{example}

\begin{example}\label{ex:birational}
Suppose $V' \hookrightarrow V$ is an inclusion which induces a birational map 
$X(V) \to X(V')$.  Fix any flag $Y_\bullet$ on $X$, and let $h:\Z^d\to \Z^d$ be the identity.  The corresponding ring map $R(V')\to R(V)$ is compatible with $\nu$.

More specifically, let $\cL$ be a very ample line bundle on $X$, set $V=H^0(X,\cL)$, and assume $\Gamma_{Y_\bullet}(V)$ is finitely generated.  Let $V'\subseteq V$ be a subspace such that the corresponding rational map $\phi:X \to X'=X(V')$ is a birational morphism, and assume that $\Gamma_{Y_\bullet}(V')$ is also finitely generated.  Then $\Delta_{Y_\bullet}(V')\subseteq \Delta_{Y_\bullet}(V)$, and the birational morphism $\phi\colon X \to X'$ degenerates to a birational morphism of toric varieties $\phi_0\colon X(\Delta_{Y_\bullet}(V)) \to X(\Delta_{Y_\bullet}(V'))$.  That is, there is a diagram
\begin{diagram}
\XX & & \rTo^{\Phi} & & \XX' \\
        & \rdTo &    & \ldTo \\
        &      &  \A^1,
\end{diagram}
such that $\Phi_t \isom \phi$ for $t\neq 0$, and $\Phi_0 = \phi_0$.
\end{example}

\begin{example}\label{ex:subvar}
Let $R=R(V)$, $X=X(V)$, and $\nu=\nu_{Y_\bullet}$.  Let $X'=Y_r\subseteq X$, so $X'$ has dimension $d-r$, and define the restricted flag $Y'_\bullet$ to be $X'=Y_r \supset Y_{r+1} \supset \cdots \supset Y_d$.  Assume that $X'$ is not contained in the base locus of $V$.  Let $J\subseteq R$ be the (largest) homogeneous ideal of $X'$, and let $\phi:R \to R'=R/J$ be the natural map.  Then $\nu_{Y_\bullet}$ and $\nu_{Y'_\bullet}$ are compatible with $\phi$ and $h$, where $h:\Z^d\to\Z^{d-r}$ is the projection on the last $d-r$ coordinates.
\end{example}

%%%%%%%%%%%%%%%%%%%%%%%%%%%%%%%%%%%%%%%%%%%%%%%%%%%%%%%%%%%%%%%%%%%%%%%
\section{Examples}\label{s:examples}
%%%%%%%%%%%%%%%%%%%%%%%%%%%%%%%%%%%%%%%%%%%%%%%%%%%%%%%%%%%%%%%%%%%%%%%

\subsection{Ruled surfaces}

We use the conventions of \cite[\S5.2]{ha}.  Let $\pi:X=\P(\mathcal{E})\to C$ be ruled surface over a curve of genus $g$, normalized so that $H^0(C,\mathcal{E}\otimes\cL) = 0$ for any line bundle $\cL$ of negative degree on $C$.  Let $e=-\deg(\mathcal{E})$; we will assume $e\geq 0$ here.  Let $C_0$ be a section of $\pi$ corresponding to $\O_{\P(\mathcal{E})}(1)$.  Fix a point $p\in C$, and let $F = \pi^{-1}(p)$ be the fiber.  A divisor $D=aF + bC_0$ is effective iff $a,b\geq 0$, and nef iff $0\leq b\leq \frac{a}{e}$.  

Let $Y_\bullet$ be the flag $X \supset F \supset \{x\}$, where $x=C_0\cap F$.  Fix a big divisor $D=aF + bC_0$; this is maximal in its complete linear system with respect to $F$.  Replacing $D$ with a multiple if necessary, assume that $be\geq 2g$, and $\O(D)$ is normally and globally generated.  Set $V=H^0(X,\O(D))$, so $V^m = H^0(X,\O(mD))$.

\begin{proposition}
Suppose $a>be>0$, so $D$ is ample.  Then $\Gamma_\nu(V)$ is finitely generated, and $X$ degenerates to a toric variety whose normalization is isomorphic to the Hirzebruch surface $\mathbf{F}_{e}$.
\end{proposition}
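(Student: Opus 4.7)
The plan is to apply Theorem~\ref{t:deg}, for which I must (i) identify the Okounkov body $\Delta_{Y_\bullet}(V)$ as the lattice polytope of $\mathbf{F}_e$ under the polarization $\O(aF+bC_0)$, and (ii) verify that $\Gamma=\Gamma_\nu(V)$ is finitely generated.

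For (i), I would use the Lazarsfeld-Musta{\c{t}}{\u{a}} description of Okounkov bodies on surfaces in terms of Zariski decompositions of $D-tF$ for $t\in[0,a]$. The decomposition is trivial ($N_t=0$) for $t\in[0,a-be]$; for $t\in[a-be,a]$ the negative part is $(b-\frac{a-t}{e})C_0$ and the positive part is $(a-t)F+\frac{a-t}{e}C_0$. The resulting body is the trapezoid $\Delta$ with vertices $(0,0)$, $(a-be,0)$, $(a,b)$, $(0,b)$; its area $b(a-\frac{be}{2})$ equals $\frac{1}{2}D^2$, confirming $\Delta_{Y_\bullet}(V)=\Delta$. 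This trapezoid is precisely the standard lattice polytope of $\mathbf{F}_e$ under the polarization $aF+bC_0$.

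For (ii), $D=aF+bC_0$ is maximal in $V$ with respect to $F$ (noted above the proposition), and by Lemma~\ref{l:max-sum} the divisor $mD$ is maximal in $V^m$ with respect to $F$. To control the second-level valuation $\nu_2$, I would study the filtration on $\pi_*\O(mD)=\O(map)\otimes\Sym^{mb}\mathcal{E}$ induced by $0\to\O\to\mathcal{E}\to\mathcal{L}\to 0$ (with $\mathcal{L}=\mathcal{E}/\O$ of degree $-e$), whose graded pieces are the line bundles $\mathcal{L}^{mb-j}(map)$ on $C$. The hypothesis $be\geq 2g$ provides the $H^1$-vanishing on $C$ needed to relate sections of $V^m$ to sections of these graded pieces. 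Together with the distinguished section $\sigma\in H^0(X,\O(C_0))$ from $\O\hookrightarrow\mathcal{E}$ (for which $\sigma^j|_F$ vanishes at $x$ to order $j$) and the normal generation of $\O(D)$, this verifies the conditions of Section~\ref{s:conditions} needed to apply Corollary~\ref{c:cond-D}, so that $\Gamma$ is finitely generated and $\Delta_{Y_\bullet}(V)=\Conv(\nu(V))=\Delta$.

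Theorem~\ref{t:deg} then yields the flat degeneration $\XX\to\A^1$ with special fiber $X_0=\Proj\kk[\Gamma]$; because $\Delta$ is a normal two-dimensional lattice polytope, the normalization of $X_0$ is the toric variety of $\Delta$, namely $\mathbf{F}_e$ polarized by $\O(aF+bC_0)$. The main obstacle is step~(ii): a naive attempt to verify condition~$\cD_2$ appears to fail at $a_1=ma$, where $V^m(a_1)$ becomes one-dimensional and consists only of sections vanishing at $x$. Working around this requires the refined filtration analysis on $\Sym^{mb}\mathcal{E}$ together with the $H^1$-vanishing on $C$ afforded by $be\geq 2g$, and is the delicate part of the argument.
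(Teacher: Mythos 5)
Your step (i)---identifying $\Delta_{Y_\bullet}(V)$ as the trapezoid with vertices $(0,0)$, $(a-be,0)$, $(a,b)$, $(0,b)$ via the Zariski-decomposition description of Okounkov bodies on surfaces---is correct and is exactly what the paper does (citing \cite[\S6.2]{lm}).

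Your step (ii) is where you diverge, and where your argument has a genuine gap that the paper avoids entirely. You propose to verify condition $\cD_2(V,V^m)$ so as to apply Corollary~\ref{c:cond-D}, and you correctly sense trouble: at $a_1 = ma$, the restricted system $V^m(ma)$ on $F\cong\P^1$ is spanned by the restriction of the section cutting out $mbC_0$, every element of which vanishes to order $mb>0$ at $x=Y_2$. So part~(i) of condition $\cD_2$ (the requirement that $V(\ba)$ contain a section not vanishing at $Y_d$) \emph{actually fails}, and no amount of filtration analysis on $\Sym^{mb}\mathcal{E}$ will make it hold, because it is a true geometric fact reflected in the trapezoid's vertex $(a,b)$ sitting at height $b>0$. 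Your ``refined analysis'' sketch is not a repair of a delicate point; it is an attempt to prove something false.

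The way out---which is the paper's approach---is to invoke the much weaker Proposition~\ref{p:fg-easy} instead of Corollary~\ref{c:cond-D}. Since $\Conv(\nu(V))\subseteq\Delta_{Y_\bullet}(V)$ always holds, it suffices to exhibit one section of $V$ at each of the four vertices of the trapezoid; then $\Conv(\nu(V))=\Delta_{Y_\bullet}(V)$ and finite generation follows. Concretely: global generation of $\O(D)$ gives $(0,0)$; the section defining $D=aF+bC_0$ gives $(a,b)$; global generation of $\O_C(ap)$ (using $a>be\geq 2g$) gives $D\sim E+bC_0$ with $E$ a sum of fibers avoiding $F$, whence $(0,b)$; and choosing $q\neq p$ with $\O_C(be\cdot p)\cong\O_C(be\cdot q)$ (again $be\geq 2g$) gives $D\sim(a-be)F+beF'+bC_0$, then $C_0\cdot(D-(a-be)F)=0$ produces a section not vanishing at $x$, whence $(a-be,0)$. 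No condition $\cD_r$ is needed, and none of the machinery you invoke ($\pi_*\O(mD)$, the filtration on $\Sym^{mb}\mathcal{E}$, $H^1$-vanishing on $C$) is required for the argument.
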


\begin{proof}
Using \cite[\S6.2]{lm}, we find that the Okounkov body $\Delta_{Y_\bullet}(V)$ is the trapezoid with vertices at $(0,0)$, $(0,b)$, $(a,b)$, and $(a-be,0)$.  It is well-known that the toric variety corresponding to this polytope is $\mathbf{F}_e$ (see, e.g., \cite[\S1.1]{fulton}).  We will check that these four points occur as valuations of sections in $V$; the claim then follows from Proposition~\ref{p:fg-easy}.

First, we have assumed $\O(D)$ is globally generated, so there is a section not vanishing at $x$; this accounts for $(0,0)$.  Since $a>be\geq 2g$, the divisor $\O_C(a\cdot p)$ is globally generated, and it follows that $D$ is linearly equivalent to $E+bC_0$, for some effective divisor $E$ pulled back from $C$ and not containing $F$; the corresponding section has valuation $(0,b)$.  The section defining divisor $D$ itself evaluates to $(a,b)$.

To find a section with valuation $(a-be,0)$, note that $be\geq 2g$ implies there is a point $q\neq p$ with $\O_C(be\cdot p)\isom\O_C(be\cdot q)$.  It follows that $D$ is linearly equivalent to $(a-be)F+beF' + bC_0$, so there are sections $s$ with $\nu_1(s) = a-be$.  Now $C_0\cdot(D-(a-be)F) = C_0\cdot (beF + bC_0) = 0$, so there is a section of $\O(D-(a-be)F)$ not vanishing at $x$.  The image of this section under the map $H^0(X, \O(beF'+bC_0)) \to H^0(X,\O(D))$ is the desired section.
\end{proof}

A similar argument works when $a\leq be$, in which case $X(V)$ degenerates to a toric variety whose normalization is the cone over the rational normal curve of degree $e$.

\subsection{Abelian surfaces}

Let $X$ be an abelian surface with Picard number at least $3$.  Let $C$ be a curve such that the line bundle $\O(C)$ is normally generated, and let $C_0$ be a curve with $(C_0^2)=0$, not numerically equivalent to a multiple of $C$.  Write $n=(C^2)$ and $m=(C\cdot C_0)$, so $m>0$, and it follows that $D=C+C_0$ is ample.  For the flag $Y_\bullet$, take $X \supset C \supset \{x\}$, where $x \in C \cap C_0$.

\begin{proposition}\label{p:abelian}
Assume that $\O(C)|_C \isom \O_C(n x)$ and $\O(C_0)|_C \isom \O_C(m x)$.  For $D=C+C_0$, set $V=H^0(X,\O(D))$.  Then $\Gamma_\nu(V)$ is finitely generated, and $X$ degenerates to a toric variety whose normalization is isomorphic to the Hirzebruch surface $\mathbf{F}_n$.
\end{proposition}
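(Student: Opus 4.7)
The plan is to follow the template of the preceding ruled-surface proposition: compute the Okounkov body $\Delta = \Delta_{Y_\bullet}(V)$ explicitly, verify that each of its vertices arises as the valuation of a section of $V$, and then invoke Proposition~\ref{p:fg-easy} together with Theorem~\ref{t:deg}.

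First I would compute $\Delta$ using the Zariski-decomposition recipe of \cite[\S6.2]{lm}. For $0 \le t \le 1$ the divisor $D-tC = (1-t)C + C_0$ satisfies $(D-tC)\cdot C = (1-t)n+m \ge 0$ and $(D-tC)\cdot C_0 = (1-t)m \ge 0$, so it is nef and its Zariski decomposition is trivial. For $t > 1$ the intersection $(D-tC)\cdot C_0 = (1-t)m < 0$ with the irreducible curve $C_0$ precludes pseudoeffectivity, so $\mu = 1$. Hence the slice of $\Delta$ at height $t$ is the interval $[0,(1-t)n+m]$, producing the trapezoid with vertices $(0,0)$, $(0,n+m)$, $(1,m)$, $(1,0)$; its slanted edge has slope $-n$, so the associated normal toric variety is $\mathbf{F}_n$ (cf.~\cite[\S1.1]{fulton}).

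Next I would exhibit, for each vertex, a section of $V$ realizing it. The vertex $(0,0)$ comes from any section of the globally generated $\O(D)$ that is nonzero at $x$ and does not contain $C$. For $(1,0)$ and $(1,m)$ one uses products $s_C \cdot t$ and $s_C \cdot t'$, where $t,t' \in H^0(X,\O(C_0))$ are chosen so that $t|_C$ is nonzero at $x$, while $t'|_C$ realizes the unique (up to scalar) section of $\O(C_0)|_C \isom \O_C(mx)$ with divisor $mx$; the hypothesis $\O(C_0)|_C \isom \O_C(mx)$ together with the freedom afforded by $\rho(X) \ge 3$ (permitting one to replace $C_0$ by a suitable translate within its numerical class) ensures that such $t, t'$ exist. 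The delicate apex $(0,n+m)$ I would handle via the factorization
\[
\O(D)|_C \isom \O(C)|_C \otimes \O(C_0)|_C \isom \O_C(nx)\otimes \O_C(mx) \isom \O_C((n+m)x),
\]
constructing the required section as a product $s_1 s_2$ of sections $s_1 \in H^0(X,\O(C))$ and $s_2 \in H^0(X,\O(C_0))$ whose restrictions to $C$ vanish to orders exactly $n$ and $m$ at $x$; the lift of $s_1$ is made possible by the normal generation of $\O(C)$. Once all four vertices are realized as $\nu(f)$ for some $f \in V$, Proposition~\ref{p:fg-easy} forces finite generation of $\Gamma_\nu(V)$, and Theorem~\ref{t:deg} delivers the degeneration of $X$ to a toric variety whose normalization is $\mathbf{F}_n$.

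The main obstacle is the lifting problem in the previous paragraph. A priori the restriction maps $V \to H^0(C,\O_C((n+m)x))$ and $H^0(X,\O(C_0)) \to H^0(C,\O_C(mx))$ need not be surjective on an abelian surface, since $h^1$ of the relevant kernel sheaves can be positive---for instance, $\chi(\O(C_0)) = 0$ together with $h^2(\O(C_0)) = 0$ forces $h^1(X,\O(C_0)) = h^0(X,\O(C_0)) > 0$. The three hypotheses---normal generation of $\O(C)$, $\rho(X) \ge 3$, and the precise normal-bundle isomorphisms $\O(C)|_C \isom \O_C(nx)$ and $\O(C_0)|_C \isom \O_C(mx)$---are designed in concert to ensure that the particular sections with extremal vanishing at $x$ do land in the image of these restriction maps.
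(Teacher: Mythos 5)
Your proof follows the same plan as the paper's: compute the trapezoidal Okounkov body with vertices $(0,0)$, $(0,n+m)$, $(1,m)$, $(1,0)$, verify these lie in $\nu(V)$, and invoke Proposition~\ref{p:fg-easy}. You are more explicit about the Zariski-decomposition computation, which the paper simply cites from \cite[Figure~1]{lm}, and you phrase the vertex-realization step in terms of explicit products of sections rather than the paper's ``maximal divisor'' language; but the underlying strategy is identical.

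There is, however, a genuine gap at the vertex $(1,0)$, and the remedy you propose does not work. An irreducible curve $C_0$ on an abelian surface with $(C_0^2)=0$ is an elliptic curve, and one computes $h^0(X,\O(C_0))=1$. Consequently the subspace of $V$ consisting of sections vanishing along $C$ is one-dimensional, spanned by $s_C\cdot s_{C_0}$, whose valuation is $(1,\mathrm{mult}_x(C_0\cap C))$; since $x\in C_0\cap C$ the second coordinate is at least $1$, so $(1,0)\notin\nu(V)$, and there is no $t\in H^0(X,\O(C_0))$ with $t|_C$ nonvanishing at $x$. Replacing $C_0$ by a numerically equivalent translate $C_0'$ produces a section of $\O(C_0')$, not $\O(C_0)$, so $s_C\cdot t'$ is not a section of $\O(D)$; the Picard-number hypothesis does not give the freedom you invoke. (The paper's own proof asserts $(1,0)\in\nu(V)$ with equally little justification, so the issue appears to be inherited.) Similarly, the lifting obstruction you flag at $(0,n+m)$ is real---the connecting map of $0\to\O_X\to\O(C)\to\O_C(nx)\to 0$ lands in $H^1(\O_X)\cong\kk^2$---and normal generation of $\O(C)$, which controls multiplication maps $\Sym^kH^0(\O(C))\to H^0(\O(kC))$, does not obviously force the section with divisor $nx$ into the image of the restriction map. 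Your instinct that the hypotheses must ``conspire'' to fix this is reasonable, but as written the argument is incomplete at both of these vertices.
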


\begin{proof}
Note that $D$ is maximal with respect to $C$, and the hypothesis means that $H^0(C,D|_C)$ and $H^0(C,(D-C)|_C)$ both contain maximal divisors with respect to $x$; namely, $(n+m) x$ and $m x$, respectively.  We see that $\nu(V)$ contains the points $(0,0)$, $(1,0)$, $(0,n+m)$, and $(1,m)$, and from the description of $\Delta_{Y_\bullet}(V)$ given in \cite[Figure~1]{lm}, these are the vertices of the Okounkov body.  The claim follows from Proposition~\ref{p:fg-easy}.
\end{proof}

\subsection{A counterexample}\label{ss:counterex}

Fix a flag $Y_\bullet$ on $X$ and a linear system $V\subseteq H^0(X,\cL)$.  Suppose that for all $0\leq r\leq d-1$ and all $\ba\in\nu|_r(V)$, the restricted system a linear system $V(\ba)$ contains a maximal divisor with respect to $Y_{r+1}$.  It may still happen that $\nu(V^2) \supsetneq 2\cdot\nu(V)$.

A simple example may be constructed as follows.  On $X=\P^1\times\P^1$, take the flag given by $Y_1 = \{0\}\times\P^1$ and $Y_2 = (0,0)$.  Choose coordinates $\{x,y\}$ around the point $(0,0)$, so $Y_1$ has local equation $\{x=0\}$, let $V \subseteq H^0(X,\O(1,3))$ be the four-dimensional subspace spanned by
\[
  1,\, x,\, y + xy^3,\, xy.
\]
(The image of $X$ in $\P(V)=\P^3$ is a singular cubic surface.)  One easily checks that $\nu(V)$ and $\nu(V^2)$ are as in Figure~\ref{f:counterex}.  In particular, using the notation of \S\ref{s:conditions}, note that 
\[
 \frac{x\cdot (y+xy^3) - 1\cdot xy}{x^2} = y^3
\]
lies in $(V^2)(2)$, but does not come from $V(1)^2$, so the condition $\cC_2(V,V)$ fails.

\begin{figure}
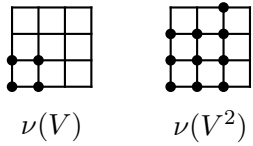

\pspicture(-70,-20)(200,50)
\psset{xunit=10pt,yunit=10pt}
\psgrid[subgriddiv=0,labels=none,gridlabelcolor=white](0,0)(3,3)
\pscircle*(0,0){2}
\pscircle*(0,1){2}
\pscircle*(1,0){2}
\pscircle*(1,1){2}

\rput(1.5,-1.5){$\nu(V)$}

\psgrid[subgriddiv=0,labels=none,gridlabelcolor=white](6,0)(9,3)
\pscircle*(6,0){2}
\pscircle*(6,1){2}
\pscircle*(6,2){2}
\pscircle*(7,0){2}
\pscircle*(7,1){2}
\pscircle*(7,2){2}
\pscircle*(8,0){2}
\pscircle*(8,1){2}
\pscircle*(8,2){2}
\pscircle*(8,3){2}

\rput(7.5,-1.5){$\nu(V^2)$}

\endpspicture

\caption{\small Images of $\nu$ for the linear system $V$ of \S\ref{ss:counterex}.}
\label{f:counterex}
\end{figure}

\subsection{Bott-Samelson varieties}\label{ss:bs}

%\begin{example}
This example is intended for readers familiar with flag varieties and Bott-Samelson resolutions.%; we plan to return to this subject in a future paper.

Let $B\subseteq GL_3$ be the subgroup of upper-triangular matrices, and let $T$ be the 
diagonal torus.  Let $\alpha=(1,-1,0)$ and $\beta=(0,1,-1)$ be the two simple roots.  
Set $\ul{\alpha} = (\alpha,\beta,\alpha)$, so we have a Bott-Samelson variety 
$X=X(\ul{\alpha}) = P_\alpha\times^B P_\beta\times^B P_\alpha/B$, together with a birational 
map $\phi:X\to GL_3/B$.  In matrix coordinates, this map is
\begin{equation*}
\left[ \left(\begin{array}{ccc} x & 1 &  \\ 1 &  &  \\ &  & 1\end{array}\right), \;
\left(\begin{array}{ccc}1 &  &  \\ & y & 1 \\ & 1 & \end{array}\right), \;
\left(\begin{array}{ccc} z & 1 &  \\ 1 &  &  \\ &  & 1\end{array}\right) \right]
\mapsto
\left(\begin{array}{ccc} xz+y & x & 1 \\ z & 1 &  \\ 1 &  & \end{array}\right).
\end{equation*}

The flag variety $GL_3/B$ embeds in $\P^2\times\P^2$, and the restriction of $\O(1,1)$ 
is a very ample line bundle $\cL=\cL(\rho)$ on $GL_3/B$.  Its sections correspond 
to semistandard Young tableaux on the partition $(2,1)$; they are products of minors 
of the $3\times 3$ matrix, with the rows of each minor indexed by a column of the tableau.  
To get sections of $\phi^*\cL$, just take these minors of the matrix on the RHS above.  
For example, the tableau 
\pspicture(0,2)(25,25)\psline{-}(0,0)(10,0)(10,20)(0,20)(0,0) 
\psline(0,10)(20,10)(20,20)(10,20) \rput(5,15){$1$}\rput(5,5){$2$}\rput(15,15){$3$} 
\endpspicture
gives $[(xz+y)-xz]\cdot 1 = y$.  The space $U=H^0(X,\phi^*\cL)$ is eight-dimensional, with 
a basis given by the sections
\[
  1,\; x,\; y,\; z,\; xz,\; yz,\; x(xz+y),\; y(xz+y).
\]
%Take the flag $X=X(\alpha,\beta,\alpha)\supset Y_1 \supset Y_2 \supset Y_3$.  In matrix coordinates, the flag is given by $Y_1 = \{x=0\}$, $Y_2=\{x=y=0\}$, $Y_3=\{x=y=z=0\}$.
Using the above matrix coordinates, take the flag $Y_\bullet$ given by $Y_1 = \{x=0\}$, $Y_2=\{x=y=0\}$, $Y_3=\{x=y=z=0\}$.  

The corresponding valuation $\nu$ evaluates as follows on the sections of $\phi^*\cL$:
\[
\begin{array}{|c|c||c|c|} \hline
 s      &  \nu(s)  &   s    & \nu(s) \\ \hline \hline
1      & (0,0,0)  &   xz & (1,0,1) \\ \hline
x       & (1,0,0)  & yz    & (0,1,1) \\ \hline
y       & (0,1,0)  & x(xz+y)  & (1,1,0) \\ \hline
z     & (0,0,1)   & y(xz+y)   & (0,2,0). \\ \hline
\end{array}
\]

\begin{figure}[t]
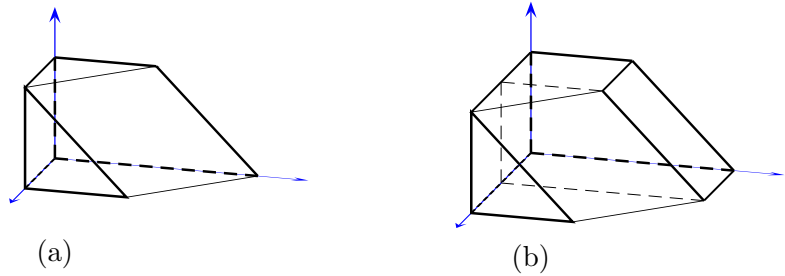

\pspicture(-50,-40)(100,80)

\psset{viewpoint=3.4 1 1.1 } 
\psset{unit=2pt}

\ThreeDput[normal=0 0 1]{% xy-plane 
 \psline[linewidth=.5pt,linecolor=blue]{->}(0,0)(30,0)% 
}
\ThreeDput[normal=0 0 1]{% xy-plane 
 \psline[linewidth=.5pt,linecolor=blue]{->}(0,0)(0,50)% 
}

\ThreeDput[normal=1 0 0]{
 \psline[linewidth=.5pt,linecolor=blue]{->}(0,0)(0,30)% 
}

%back face
\ThreeDput[normal=1 0 0]{
 \psline[linewidth=1pt,linestyle=dashed](0,0)(40,0) 
 \psline[linewidth=1pt,linestyle=dashed](0,0)(0,20)
 \psline[linewidth=1pt](40,0)(20,20)
 \psline[linewidth=1pt](0,20)(20,20)
}

%front
\ThreeDput[normal=1 0 0](20,0,0){
 \pspolygon[linewidth=1pt](0,0)(20,0)(0,20)
}

%bottom
\ThreeDput[normal=0 0 1]{
 \psline[linewidth=1pt,linestyle=dashed](0,0)(20,0) 
 \psline[linewidth=1pt](0,40)(20,20) 
}

%top
\ThreeDput[normal=0 0 1](0,0,20){
 \psline[linewidth=1pt](0,0)(0,20)(20,0)(0,0) 
}

\rput(0,-18){(a)}

\endpspicture 
\pspicture(-80,-42)(100,80)

\psset{viewpoint=3.4 1 1.1 } 
\psset{unit=2pt}

\ThreeDput[normal=0 0 1]{% xy-plane 
 \psline[linewidth=.5pt,linecolor=blue]{->}(0,0)(50,0)% 
}
\ThreeDput[normal=0 0 1]{% xy-plane 
 \psline[linewidth=.5pt,linecolor=blue]{->}(0,0)(0,50)% 
}

\ThreeDput[normal=1 0 0]{
 \psline[linewidth=.5pt,linecolor=blue]{->}(0,0)(0,30)% 
}

%back face
\ThreeDput[normal=1 0 0]{
 \psline[linewidth=1pt,linestyle=dashed](0,0)(40,0) 
 \psline[linewidth=1pt,linestyle=dashed](0,0)(0,20)
 \psline[linewidth=1pt](40,0)(20,20)
 \psline[linewidth=1pt](0,20)(20,20)
}

%front
\ThreeDput[normal=1 0 0](40,0,0){
 \pspolygon[linewidth=1pt](0,0)(20,0)(0,20)
}

\ThreeDput[normal=1 0 0](20,0,0){
 \psline[linewidth=1pt](40,0)(20,20)
  \psline[linewidth=.2pt,linestyle=dashed](40,0)(0,0)
  \psline[linewidth=.2pt,linestyle=dashed](20,20)(0,20)
  \psline[linewidth=.2pt,linestyle=dashed](0,0)(0,20)
}

%bottom
\ThreeDput[normal=0 0 1]{
 \psline[linewidth=1pt,linestyle=dashed](0,0)(40,0)
 \psline[linewidth=1pt](40,20)(20,40)
 \psline[linewidth=1pt](20,40)(0,40) 
}

%top
\ThreeDput[normal=0 0 1](0,0,20){
 \psline[linewidth=1pt](0,0)(40,0)(20,20)(0,20) 
}

\rput(0,-20){(b)}

\endpspicture 
\caption{Okounkov bodies for $GL_3/B$ and $X(\alpha,\beta,\alpha)$ \label{f:bs-ex}}
\end{figure}

The Okounkov body $\Delta_{Y_\bullet}(U)$ is the convex hull of these eight points; see 
Firgure~\ref{f:bs-ex}(a).  This is the same polytope as the one described in \cite[Example 6.1]{gonzalez}.  Indeed, 
$GL_3/B$ is isomorphic to $\P(T_{\P^2})$, the projectived tangent bundle of $\P^2$.  Up 
to a lattice isomorphism, the polytope may also be identified with the Gelfand-Tsetlin 
polytope, and it was shown in \cite{km} that the flag varieties for $GL_n$ degenerate to 
these toric varieties.

Finally, let $\cM = \phi^*\cL \otimes pr_1^*\O(1)$, where $pr_1:X \to P_\alpha/B\isom\P^1$ 
is the projection.  One can check that $\cM$ is very ample on $X$, and that $V=H^0(X,\cM)$ 
has a basis consisting of the eight sections spanning $U$, together with the five additional 
ones obtained by multiplying by $x$.  The corresponding Okounkov body is shown in Figure~
\ref{f:bs-ex}(b).  Note that this is isomorphic to the polytope appearing in 
\cite[Figure~2]{fk}.
%\end{example}

Recently, Kaveh has given a different computation of Okounkov bodies on Bott-Samelson varieties, 
identifying valuation vectors with lattice points in the \emph{string cone} and establishing a 
connection with the crystal basis \cite{kaveh-crystal}.  Note that the flag used there is different 
from our $Y_\bullet$.

%%%%%%%%%%%%%%%%%%%%%%%%%%%%%%%%%%%%%%%%

%%%%%%%%%%%%%%%%%%%%%%%%%%%%%%%%%%%%%%%%

%%%%%%%%%%%%%%%%%%%%%%%%%%%%%%%%%%%%%%%%

\begin{thebibliography}{KLM}
%%%%%%%%%%%%%%%%%%%%%%%%%%%%%%%%%%%%%%%%

\bibitem[AB]{ab} Valery~Alexeev and Michel~Brion, ``Toric degenerations of spherical varieties,'' {\em Selecta Math. (N.S.)} {\bf 10} (2004), no. 4, 453--478.

\bibitem[An]{and} Dave~Anderson, ``Okounkov bodies of Bott-Samelson varieties,'' in preparation.

\bibitem[Ba]{bayer} David~Bayer, {\em The Division Algorithm and the Hilbert Scheme}, Ph.D. Thesis, Harvard University, 1982.

\bibitem[BG]{bg} Winfried~Bruns and Joseph~Gubeladze, {\em Polytopes, rings, and $K$-theory}, 
Springer, Dordrecht, 2009.

\bibitem[BH]{bh} Winfried~Bruns and J\"urgen~Herzog, {\em Cohen-Macaulay Rings}, revised edition, Cambridge University Press, Cambridge, 1998.

\bibitem[Ca]{caldero} Philippe~Caldero, ``Toric degenerations of Schubert varieties,'' {\em Transform. Groups} {\bf 7} (2002), no. 1, 51--60.

\bibitem[CK]{ck} James~B.~Carrell and Alexandre~Kurth, ``Normality of torus orbit closures in $G/P$,'' {\em J. Algebra} {\bf 233} (2000), 122--134.

\bibitem[Ei]{eisenbud} David~Eisenbud, {\em Commutative Algebra, with a View Toward Algebraic Geometry}, Springer, 1995.

\bibitem[FK]{fk} Philip~Foth and Sangjib~Kim, ``Toric degenerations of Bott-Samelson varieties,'' preprint, \texttt{arXiv:0905.1374v1 [math.AG]}.

\bibitem[Fu]{fulton} William~Fulton, {\em Introduction to Toric Varieties}, Princeton, 1993.

\bibitem[GL]{gl} N.~Gonciulea and V.~Lakshmibai, ``Degenerations of flag and Schubert varieties to toric varieties,'' {\em Transform. Groups} {\bf 1} (1996), no. 3, 215--248.

\bibitem[Go]{gonzalez} Jos\'e~Luis~Gonz\'alez, ``Okounkov bodies on projectivizations of rank two toric vector bundles,'' {\em J. Algebra} {\bf 330} (2011), 322--345.

\bibitem[Ha]{ha} Robin~Hartshorne, {\em Algebraic Geometry}, Springer, 1977.

\bibitem[Jo]{jow} Shin-Yao~Jow, ``Okounkov bodies and restricted volumes along very general curves,'' {\em Adv. Math.} {\bf 223} (2010), no. 4, 1356--1371.

\bibitem[Ka1]{kaveh} Kiumars~Kaveh, ``SAGBI bases and degeneration of spherical varieties to toric varieties,'' {\em Michigan Math. J.} {\bf 53} (2005), no. 1, 109--121.

\bibitem[Ka2]{kaveh-crystal} Kiumars~Kaveh, ``Crystal bases and Newton-Okounkov bodies,'' preprint, \texttt{arXiv:1101.1687v1 [math.AG]}.

\bibitem[KK1]{kk-alg} Kiumars~Kaveh and A.~G.~Khovanskii, ``Convex bodies and algebraic equations on affine varieties,'' preprint, \texttt{arXiv:0804.4095v1 [math.AG]}.

\bibitem[KK2]{kk} Kiumars~Kaveh and A.~G.~Khovanskii, ``Newton convex bodies, semigroups of integral points, graded algebras and intersection theory,'' to appear in {\em Ann. Math.}  Preprint available as \texttt{arXiv:0904.3350v3 [math.AG]}.

\bibitem[KM]{km} Mikhail~Kogan and Ezra~Miller, ``Toric degeneration of Schubert varieties and Gelfand-Tsetlin polytopes,'' {\em Adv. Math.} {\bf 193} (2005), no. 1, 1--17.

\bibitem[KLM]{klm} Alex~K\"uronya, Victor~Lozovanu, and Catriona~Maclean, ``Convex bodies appearing as Okounkov bodies of divisors,''  {\em Adv. Math.}~{\bf 229} (2012), no. 5, 2622--2639.


\bibitem[La]{rlaz} Robert~Lazarsfeld, {\em Positivity in algebraic geometry I: Classical setting: line bundles and linear series}, Springer-Verlag, 2004.

\bibitem[LM]{lm} Robert~Lazarsfeld and Mircea~Musta{\c{t}}{\u{a}}, ``Convex bodies associated to linear series,'' {\em Ann. Sci. \'Ecole Norm. Sup.} {\bf 42} (2009), no. 5, 783--835.

\bibitem[Ok1]{ok1} Andrei~Okounkov, ``Brunn-Minkowski inequality for multiplicities,'' {\em Invent. Math.} {\bf 125} (1996), no. 3, 405--411.

\bibitem[Ok2]{okounkov} Andrei~Okounkov, ``Multiplicities and Newton polytopes,'' {\em Kirillov's seminar on representation theory}, 231--244, in {\em Amer. Math. Soc. Transl. Ser. 2}, {\bf 181}, Amer. Math. Soc., Providence, RI, 1998.

\bibitem[Te]{teissier} Bernard~Teissier, ``Valuations, deformations, and toric geometry,'' in {\em Valuation theory and its applications, Vol. II} (Saskatoon, SK, 1999), 361--459, Fields Inst. Commun., {\bf 33}, Amer. Math. Soc., Providence, RI, 2003.

%%%%%%%%%%%%%%%%%%%%%%%%%%%%%%%%%%%%%%%%
\end{thebibliography}
\end{document}